\documentclass{article}

\usepackage{amsmath,amsfonts,amssymb,amsthm}
\usepackage{showkeys}


\usepackage{geometry}
\geometry{ hmargin=2.2cm, vmargin=2cm }



\newtheorem{theorem}{Theorem}
\newtheorem{lemma}[theorem]{Lemma}

\newtheorem{remark}{Remark}
\newtheorem{definition}{Definition}
\newtheorem{proposition}{Proposition}

\newcommand{\sectionnew}[1]{
\section{#1}\setcounter{equation}{0}
\setcounter{theorem}{0}}
\setcounter{property}{0}

\newcommand{\R}{\mathbb{R}}
\newcommand{\Z}{\mathbb{Z}}
\newcommand{\N}{\mathbb{N}}
\newcommand{\T}{\mathbb{T}}
\newcommand{\ep}{\epsilon}

\newcommand{\para}{{_{||}}}

\newcommand{\xg}{{x_{g}}}
\newcommand{\vg}{{v_g}}

\newcommand{\alphab}{\bar{\alpha}}
\newcommand{\Frho}{\hat{\rho}}
\newcommand{\tf}{{\tilde f}}
\newcommand{\tm}{{\tilde{m}}}

\DeclareMathOperator{\diver}{div}
\DeclareMathOperator{\signe}{sign}

\newcommand{\cqfd}
{%
\mbox{}%
\nolinebreak%
\hfill%
\rule{2mm}{2mm}%
\newline
\newline
}


\newcommand{\au}{{\vec{u}}}

\title{WELL-POSEDNESS OF A DIFFUSIVE GYROKINETIC MODEL.}
\author{MAXIME HAURAY and ANNE NOURI\\
\\LATP, Aix-Marseille University, France}

\date{}

\begin{document}

\maketitle

{\noindent \bf Abstract.}\hspace{0.1in}
We study a finite Larmor radius model used to describe the ions distributions in the core of a tokamak plasma, that consist in a  gyro-kinetic transport equation,  coupled with an electro-neutrality equation. Since the last equation do not provide enough regularity on the electric potential, we introduce a simple linear collision operator adapted to the finite Larmor radius approximation. Next we study the two-dimensional dynamics in the direction perpendicular to the magnetic field and prove thanks to the smoothing effects of the collisions and of the gyro-average the global existence of solutions, as well as short time uniqueness and stability.\\

\footnotetext[1]{2000 Mathematics Subject Classification. 41A60, 76P05, 82A70,
78A35.}
\footnotetext[2]{Key words. Plasmas, gyrokinetic model, electroneutrality equation, Cauchy problem.}

\sectionnew{Introduction.}

  The model studied in that article describes the density of ions in the core of a tokamak plasma. 
In such highly magnetized plasma, the charged particles have a very fast motion of gyration around the magnetic lines, called the Larmor gyration. A good approximation is then to consider that the particles are uniformly distributed on gyro-circle, parametrized by their gyro-center, and Larmor radius $r_L$ (that is proportionnal to the speed of rotation $u$, and in our article, we will forget the physical constant and write $r_L = u$). The models obtained in that new variables are kinetic in the direction parallel to the magnetic field lines, and fluids (precisely a superposition of fluid models) in the perpendicular direction. For rigorous derivation of such models and more complete discussion on its validity, we refer to \cite{FreSon} and our previous work \cite{GheHauNou09}, in which the derivation is perform from a Vlasov equation in the limit of large magnetic field. 

  Such gyro-kinetic models  are usually closed by an electro-neutrality equation, that as usual provide very few regularity for the eletric field, so that the well-posedness of gyro-kinetic models is, at least at our knowledge unknown. In this article, we add a "gyro-averaged" collision operator to the model and study the dynamics in the directions perpendicular to the field only. 

Let us now describe our precise model.  The ion distribution
function $f(t,x,u)$ in gyro-coordinates depends on the time $t$, the gyro-center position $x \in
\T^2$ and the velocity of the fast Larmor rotation $u \in \R^+$ (which is also
proportional to the Larmor radius). The electric potential $\Phi$ depends only
on $(t,x)$. They satisfy the following system of equation on $\Omega = \T^2
\times \R^+$ 
\begin{align} 
 \frac{\partial f}{\partial t}   + (J^0_u \nabla_x \Phi)^\perp \cdot \nabla_x
f = \beta
u \partial_u f + 2\beta  f + \nu \left( \Delta_x f  + \frac{1}{u} \partial_u ( u
\partial_u f) \right)  \label{eq:gyroFP2D}   \\
(\Phi   -  \Phi \ast _x H_T  )(t,x)=  T \left( \rho(t,x)  -1 \right)
\label{eq:elecneutr} \\
 \rho(t,x) = \int (J^0_w f(t,x,w)2\pi wdw) \label{eq:defrho} \\
f(0,x,v) = f_i(x,v) \qquad \forall (x,u) \in \Omega \label{eq:indata}
\end{align}
where $\beta$ and $\nu$ are two positive constant,  $\rho$ is the density in physical space,
$T$ is the ion temperature,
\begin{equation} \label{eq:J0}
J^0_{u} h(x_g) = \frac{1}{2\pi} \int_0^{2\pi} h(x_g +u e^{i\varphi_c}) \,d\varphi_c \, ,
\end{equation}
is the well known zero-order Bessel operator \cite{Watson} and
\begin{equation} \label{eq:dfH}
H_T(x)= \frac{e^{-\frac{| x| ^2}{4T}}}{2\pi ^{\frac{3}{2}}\sqrt{T} | x| } \,.
\end{equation}
We also used the notation $b^\perp =(-b_2,b_1)$, for any vector
$b=(b_1,b_2)$ of $\R^2$.  

\medskip
That model without the Fokker-Planck
operator ($\nu=\beta=0$) was studied in
our previous work \cite{GheHauNou09} - to which we refer for an heuristical
derivation of the electro-neutrality equation \eqref{eq:elecneutr} - and is
used by physicists for simulation, by instance in Gysela code \cite{Gysela06}. 
Here we just mention that \eqref{eq:elecneutr} is obtained in a close to
equilibrium setting, with an adiabatic hypothesis on the distribution of the
electrons $n_e =n_0 e{-\frac{e \Phi}{T_e}} \approx n_0 \left( 1 + \frac{e
\Phi}{T_e}\right)$, and an hypothesis of adiabatic response of the ions on the
gyro-circle which gives rise to the $\Phi \ast H_T$ term. As usual in
quasi-neutral equation, we have no good a priori estimates on
the regularity of $E = - \nabla \Phi$.

\medskip
Remark that even if that equation
\eqref{eq:gyroFP2D} is derived from a Vlasov
model (A rigourous derivation of a more general $3D$ model is performed for
fixed field $E$ in section \ref{sect:dif}), it is of ``fluid'' nature. In fact
there is no transport in the remaining of the velocity variable $u$, and the
position of the gyro-center is transported by the eletric drift $(J^0_u
E)^\perp$. So that the equation is similar to the $2D$ Navier-Stokes equation
written in vorticity. More precisely, we have a family of fluid model depending
on a parameter $u$, which are coupled thanks to diffusion in the $u$ variable,
and by the closure used for $E$ described below. 

  Moreover, we will prove in the following that thanks to gyro-average $J^0_u$,
equation has the same regularity than the NS2D equation in vorticity. In fact, the force field $J^0_u \nabla_x \Phi$ belongs naturally to $H^1$ if $f \in L^2$ with some weight.  That is why we obtain the same result that are known about the NS2D equation : global existence and short time uniqueness and stability. However, our model present an additional difficulties which is the lose of regularity for small $u$. In fact, for small value of $u$ the $H^1$ bound (in $x$ only) of $J^0_u \nabla_x \Phi$ explodes.  

\medskip
To state our reuslts properly, we will need the following definitions and notations : 
\begin{itemize}

\item In the sequel, the letter $C$ will design a numerical constant, that may change form line to line. Unless it is mentioned, such constants are independent of everything.

\item $L^2_u(\Omega)=L^2(\Omega, u dx  du)$ is the space of square integrable
functions with respect to the measure $u dxdu$.

\item We shall use various norm on $\T^2$ or on $\Omega$. To avoid confusion, we
will use the following convention. All the norm performed on the whole $\Omega$
will have their weight with respect to $u$ as additional indice. By instance
$\| \cdot\|_{2\pi u}$, $\| \cdot \|_{H^1_{2\pi u(1+u^2)}}$. All the norms
without any indices are norm on $\T^2$ only.

\item For any weight function $k : \R^+ \mapsto \R^+$, the norm $\| \cdot \|_{2,m}$
is defined for any function $f$ on $\Omega$ by
\[
 \| f \|_{2,k} = \left( \int \| F(\cdot,u)\|_2 k(u) \,du \right)^{\frac12}
\]
\item The most usefull weights will be $m(u) = 2 \pi u (1+ u^2)$ and $\tm(u)=1+u^2$. 

\item We change a litlle bit the duality used to define distributions in the
following definition

\begin{definition}
Using distributions with the weight $u$ means that duality is performed as
\[
\langle f, g \rangle_u = \int f g \, dx_g dv_\para u du \, .
\]
\end{definition}
This definition may seem a little artificial because the simple
definition of derivative with respect to $u$,
is not valid. Instead,
\[
\langle \partial_u f, g \rangle_u  = - \langle f, \partial_u g \rangle_u -
\langle \frac{f}{u}, g \rangle_u  \, .
\]
However, this weight respects the underlying physics ($u$ is in fact the 1D norm of a 2D velocity variable) and has many advantages. For
instance the operator $(1/u) \partial_u(u\partial_u )$ is
self-adjoint with this weight. 
\end{itemize}

\medskip
Our precise result are the following. We prove global existence under the hypothesis $\|f_i \|_{2,m}  <+ \infty$.

\begin{theorem}\label{thm:existence}
Let $f_i$ satisfy  $\|f_i \|_{2,m}    <+ \infty$.
Then there exists at least one weak solution $f \in L^{\infty }( \R^+,L^2_u(\Omega ))\cap L^{2}(\R^+, H^1_u(\Omega ))$ to \eqref{eq:gyroFP2D}-\eqref{eq:elecneutr} with initial condition $f_i$, which also satisfies for any $t>0$
\[
\|f(t)\|_{2,u}^2 + \nu \int_0^t \|(\nabla_x,\partial_u) f \|^2_{2,u} \,ds \leq \|f_i \|_{2,u}  \,,
\]
and all the a priori estimates of the previous section (Lemma \ref{lem:u-moment}, \ref{lem:disp}, \ref{lem:grad_xf}) if their initial hypothesis are satisfied. 

\end{theorem}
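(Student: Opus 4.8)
The plan is to construct solutions by a regularized iterative scheme, to pass to the limit using the a priori estimates of the previous section together with an Aubin--Lions compactness argument, and to treat the degeneracy of the gyro-averaged force field near $u=0$ as the central difficulty. First I would set up the approximation. For a given admissible velocity field, equation \eqref{eq:gyroFP2D} is a linear advection--diffusion equation, parabolic in both $x$ and $u$, which is uniquely solvable once the field is smooth. I would therefore linearize the coupling: starting from a mollified datum and an iterate $f^n$, compute $\rho^n$ by \eqref{eq:defrho}, recover $\Phi^n$ from \eqref{eq:elecneutr} — whose Fourier symbol $1-\widehat{H_T}(\xi)$ is invertible on the nonzero modes, so that $(I-H_T\ast)^{-1}$ is bounded and regularity preserving, the zero mode being fixed by the neutrality constraint — and then solve the linear parabolic problem for $f^{n+1}$ with field $(J^0_u\nabla_x\Phi^n)^\perp$. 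A Schauder fixed-point argument (or, on a short interval, a contraction, which will also yield the later short-time uniqueness) provides approximate solutions, and the a priori estimates below forbid blow-up so that the construction is global; a Galerkin truncation in the $x$-Fourier modes together with a spectral discretization in $u$ adapted to the self-adjoint operator $(1/u)\partial_u(u\partial_u)$ gives an equivalent family of approximants.

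Second, I would collect uniform a priori estimates. Testing \eqref{eq:gyroFP2D} against $f$ in the weighted duality $\langle\cdot,\cdot\rangle_u$ gives the basic energy identity: the transport term drops because $(J^0_u\nabla_x\Phi)^\perp=(\nabla_x J^0_u\Phi)^\perp$ is divergence free in $x$, the diffusion produces the coercive $\nu\|(\nabla_x,\partial_u)f\|_{2,u}^2$, and the drift terms $\beta u\partial_u f+2\beta f$ combine, after integration by parts in $u$ against the weight $u\,du$, into a zero-order term handled by Gr\"onwall's lemma. This yields the bound $f\in L^\infty(\R^+,L^2_u(\Omega))\cap L^2(\R^+,H^1_u(\Omega))$ of the theorem, uniformly in the approximation parameter, and together with Lemmas \ref{lem:u-moment}, \ref{lem:disp} and \ref{lem:grad_xf} it controls the $u$-moments, the dispersion and the $H^1_x$-norm of the force field.

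Third comes compactness and passage to the limit. Reading \eqref{eq:gyroFP2D} backwards bounds $\partial_t f^n$ in $L^2(\R^+,H^{-1})$ on bounded subsets of $\Omega$, so Aubin--Lions--Simon provides strong convergence of a subsequence $f^n\to f$ in $L^2_{loc}$. Strong convergence of $\rho^n$ then follows from \eqref{eq:defrho}, and through the bounded inverse in \eqref{eq:elecneutr} and the gyro-average $J^0_u$, strong convergence in $x$ of the field $J^0_u\nabla_x\Phi^n$. The nonlinear term is then handled by a weak--strong product, strong field against weakly convergent $\nabla_x f^n$, while all linear terms pass by weak convergence; the limit $f$ solves \eqref{eq:gyroFP2D}-\eqref{eq:elecneutr}, and the energy inequality survives by lower semicontinuity of the norms.

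The hard part will be the loss of regularity of the force field for small $u$: the $H^1_x$ bound on $J^0_u\nabla_x\Phi$ furnished by Lemma \ref{lem:grad_xf} degenerates as $u\to 0$, since the Bessel multiplier $J_0(u|\xi|)\to 1$ provides no smoothing there. I would control this by exploiting that the weight $m(u)=2\pi u(1+u^2)$ vanishes at the origin: I split $\Omega$ into $\{u\le\delta\}$ and $\{u\ge\delta\}$, run the weak--strong argument on the second region where the field is uniformly bounded in $H^1_x$, and estimate the transport contribution on the first region by the weighted moment and dispersion bounds before letting $\delta\to 0$. Making this splitting compatible with the compactness extraction, uniformly in the approximation parameter, is the delicate point of the argument.
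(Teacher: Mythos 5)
Your overall architecture (regularize, solve the approximate problem by a fixed point, uniform energy estimates, Aubin--Lions compactness, weak--strong passage to the limit in the nonlinear term) is the same as the paper's. But there is a genuine gap at the first step: your linearized iteration is not well defined as stated. If the iterate $f^n$ is only bounded in $L^2_m$, then Lemmas \ref{lem:rhoreg} and \ref{lem:boundL} give $\rho^n-1\in H^{1/2}$ and $\Phi^n\in H^{1/2}$ only, so that by Lemma \ref{lem:phireg} the force field $(J^0_u\nabla_x\Phi^n)^\perp$ belongs merely to $L^2_x$ (with a constant degenerating like $u^{-1/2}$). An $L^2$ drift is not enough to solve the linear parabolic problem by the standard theory you invoke, nor to run Schauder or a contraction; and mollifying the initial datum does nothing to cure this, since the roughness comes from the coupling, not from $f_i$. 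The paper's device is precisely to regularize the potential: it defines $\Phi_n$ in \eqref{eq:phiapprox} by truncating the Fourier expansion to the modes $|k|\le n$, so that $\nabla\Phi_n$ is bounded in $L^{\infty}(0,T;H^3(\T^2))$ by $c_nM$; this is what makes the linear problem solvable (Lions--Magenes) and yields the Lipschitz estimate $\|\Phi_n(f_1)-\Phi_n(f_2)\|_{L^{\infty}}\le \bar c_n\|f_1-f_2\|_{2,m}$ on which the contraction of Lemma \ref{lem:approx} rests. Your parenthetical Galerkin truncation in the $x$-Fourier modes would achieve the same effect, and should be promoted from an aside to the actual construction.

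Two smaller points. First, your claim that $\partial_t f^n$ is bounded in $L^2(H^{-1})$ is too strong: the nonlinear flux is the product of $f^n$ (only in $L^{10/3}$ by interpolation) with a field bounded only in $L^2$, hence lies in $L^{5/4}$; the paper accordingly places $\partial_t f_n$ in $W^{-1,\frac54}_{loc,u}$, which is still sufficient for the Aubin lemma. Second, the $\delta$-splitting near $u=0$ that you single out as the delicate point is not needed: the $u^{-1/2}$ degeneracy of the gyro-average is exactly compensated by the weight $u\,dx\,du$ of the working spaces (this is the content of Lemma \ref{lem:phireg} $iv)$), and the limit passage is performed against test functions carrying that weight, so no truncation in $u$ and no subsequent limit $\delta\to 0$ enter the paper's proof.
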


And we prove short time uniqueness and stability under the additional hypothesis $\|\nabla_x f \|_{2,m} < + \infty$. 

\begin{theorem}\label{thm:uniqueness}
Let $f_i$ satisfy  \[
\| f_i \|_{2,m}  + \| \nabla_x f  \|_{2,m} <+\infty \,.
\]
Then the positive time $\tau^\star$ defined in Lemma \ref{lem:grad_xf} is such that
the weak solution to \eqref{eq:gyroFP2D}-\eqref{eq:indata} , defined in Theorem \ref{thm:existence}, is unique on $[0,\tau^\star]$ . 

Moreover, that solution is stable on that interval of time in the following sense.  Assume that $(f^n)_{n \in \N}$ is a family of solutions given by theorem \ref{thm:existence} with initial conditions $f^n_i$ satisfying
\[
\lim_{n \rightarrow +\infty} \| f^n_i -f_i \|_{2,m} =0 \,, \quad \text{and } \quad \sup_{n \in \N} \|f^n_i\|_{L^2_m(L^4)} < + \infty \,.
\]
Then
\[
\lim_{n \rightarrow +\infty} \sup_{t \in [0,\tau^*]} \| f^n(t) -f(t) \|_{2,m}  = 0 \,.
\]
\end{theorem}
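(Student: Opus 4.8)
The plan is to obtain both statements from a single weighted $L^2$ energy estimate for the difference of two solutions, closed by Gronwall's lemma on $[0,\tau^\star]$. Let $f^1,f^2$ be two solutions as in Theorem \ref{thm:existence} sharing the gradient bound of Lemma \ref{lem:grad_xf} (for uniqueness, two solutions with the same data; for stability, $f^1=f^n$ and $f^2=f$). Writing $g=f^1-f^2$ and $\Psi=\Phi^1-\Phi^2$, subtracting the two copies of \eqref{eq:gyroFP2D} gives
\[
\partial_t g + (J^0_u\nabla_x\Phi^1)^\perp\cdot\nabla_x g + (J^0_u\nabla_x\Psi)^\perp\cdot\nabla_x f^2 = \beta u\partial_u g + 2\beta g + \nu\Big(\Delta_x g + \tfrac1u\partial_u(u\partial_u g)\Big).
\]
I would test this against $g(1+u^2)$ in the duality $\langle\cdot,\cdot\rangle_{2\pi u}$, which reproduces $\tfrac12\frac{d}{dt}\|g\|_{2,m}^2$. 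Since $J^0_u$ commutes with $\nabla_x$, the advecting field $(J^0_u\nabla_x\Phi^1)^\perp=(\nabla_x J^0_u\Phi^1)^\perp$ is divergence free in $x$, so the first transport term integrates to zero; the $x$-diffusion yields $-\nu\|\nabla_x g\|_{2,m}^2$; and, using the self-adjointness of $\tfrac1u\partial_u(u\partial_u\cdot)$ for the weight $u$ and integrating the extra factor $(1+u^2)$ by parts in $u$, the velocity diffusion together with the two $\beta$-terms produces the good dissipation $-\nu\|\partial_u g\|_{2,m}^2$ plus remainders bounded by $C\|g\|_{2,m}^2$, harmless for Gronwall.

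Everything then reduces to controlling
\[
N=\big\langle (J^0_u\nabla_x\Psi)^\perp\cdot\nabla_x f^2,\ g(1+u^2)\big\rangle_{2\pi u}=-\big\langle (J^0_u\nabla_x\Psi)^\perp\cdot\nabla_x g,\ f^2(1+u^2)\big\rangle_{2\pi u},
\]
the second form obtained by integrating by parts in $x$ (again the field is divergence free), which is advantageous because it exposes the dissipation factor $\nabla_x g$ and asks only for an $L^4_x$ norm of $f^2$. The key input is a smoothing estimate for the gyro-averaged potential: from \eqref{eq:elecneutr}--\eqref{eq:defrho}, on the Fourier side $\widehat{J^0_u\nabla_x\Psi}(k)=i\,k\,J_0(u|k|)(1-\hat H_T(k))^{-1}\int J_0(w|k|)\hat g(k,w)\,2\pi w\,dw$, where $J_0$ is the Fourier symbol of $J^0_u$. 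On $\T^2$ one has $1-\hat H_T(k)\ge c>0$ for $k\neq0$ (the mode $k=0$ being fixed by the normalization in \eqref{eq:elecneutr}), the Bessel factor in the density gives the gain $\big|\int J_0(w|k|)\hat g\,2\pi w\,dw\big|^2\lesssim |k|^{-1}\|\hat g(k,\cdot)\|_m^2$, and $\sup_k|k|J_0(u|k|)^2\lesssim u^{-1}$. Combining these I obtain
\[
\|J^0_u\nabla_x\Psi\|_{L^2_x}\lesssim u^{-1/2}\|g\|_{2,m},\qquad \|J^0_u\nabla_x\Psi\|_{H^1_x}\lesssim u^{-1/2}\big(\|g\|_{2,m}+\|\nabla_x g\|_{2,m}\big),
\]
hence, by the two-dimensional Ladyzhenskaya inequality, $\|J^0_u\nabla_x\Psi\|_{L^4_x}^2\lesssim\|J^0_u\nabla_x\Psi\|_{L^2_x}\,\|J^0_u\nabla_x\Psi\|_{H^1_x}$.

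Plugging this into the second form of $N$ and applying H\"older in $x$ with exponents $(4,4,2)$ on $(J^0_u\nabla_x\Psi,\,f^2,\,\nabla_x g)$, the weight $m(u)\sim 2\pi u$ near the origin exactly compensates the $u^{-1/2}$ singularities; a Cauchy--Schwarz in $u$ then splits off $\|\nabla_x g\|_{2,m}$ against the factor $b(t)=\big(\int(1+u^2)\|f^2(\cdot,u)\|_{L^4_x}^2\,du\big)^{1/2}$, and Young's inequality absorbs the resulting $\|\nabla_x g\|_{2,m}^2$ into the dissipation, leaving $\frac{d}{dt}\|g\|_{2,m}^2\le C(1+b^2+b^4)\|g\|_{2,m}^2$. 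Using Ladyzhenskaya in $x$ with the energy bound $f^2\in L^\infty_tL^2_u\cap L^2_tH^1_u$ of Theorem \ref{thm:existence} and the gradient control of Lemma \ref{lem:grad_xf}, one checks $b\in L^4(0,\tau^\star)$, so Gronwall applies. For uniqueness $g(0)=0$ forces $g\equiv0$; for stability $\|g^n(0)\|_{2,m}=\|f^n_i-f_i\|_{2,m}\to0$, and the hypothesis $\sup_n\|f^n_i\|_{L^2_m(L^4)}<\infty$ is what keeps the reference $L^4_x$ norms entering $b$ (hence the Gronwall constants) uniformly bounded in $n$, so that $\sup_{[0,\tau^\star]}\|f^n-f\|_{2,m}\to0$.

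The heart of the matter, and the step I expect to be the main obstacle, is the loss of the gyro-average smoothing as $u\to0$: since $J_0(u|k|)\to1$, the $H^1_x$ (hence $L^4_x$) control of $J^0_u\nabla_x\Psi$ degenerates, and a careless estimate is not integrable in $u$. The delicate point is to verify that this degeneracy is exactly of order $u^{-1/2}$ and no worse, so that it is absorbed by the vanishing of the weight $m(u)\sim u$ at the origin after the H\"older and Cauchy--Schwarz bookkeeping in $u$; this is the one place where the precise form of the weight $m$ and the order-one lower bound on $1-\hat H_T$ on $\T^2$ are essential.
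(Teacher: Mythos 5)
Your overall strategy is sound and the two statements do follow from the energy identity you write down, but your route through the nonlinear coupling term is genuinely different from the paper's, and it is worth comparing the two. The paper does \emph{not} integrate by parts in $x$: it keeps the term $\int \delta f\,\nabla_x^\perp(\delta\Phi)\cdot\nabla_x f_2\,m(u)\,dx\,du$, estimates $\|\delta f\,\nabla_x\delta\Phi\|_{2,um}$, and since $\nabla_x\delta\Phi$ is only in $H^1_x$ (not $L^\infty_x$) it pays a logarithm via the Tr\"udinger inequality, the inequality $ab\le e^a-b+b\ln b$ and Jensen. The resulting differential inequality is of Osgood type, $\dot s\le C\|\nabla f_2\|^2_{2,m/u}\,s\ln(1/s)$, which yields uniqueness and only a H\"older-type stability $s(t)\le s(0)^{e^{-H(t)}}$; the logarithm is also where the hypothesis $\sup_n\|f^n_i\|_{L^2_m(L^4)}<\infty$ enters (through Lemma \ref{lem:disp}, to control the ratio $\|\delta f\|_{L^2_m(L^4)}/\|\delta f\|_{2,m}$ inside the log). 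Your move of integrating by parts so that the product to estimate is $(J^0_u\nabla_x\Psi)\cdot\nabla_x g\cdot f^2$ lets you replace the Tr\"udinger step by the plain two-dimensional embedding $H^1\hookrightarrow L^4$ (Ladyzhenskaya), and the total power of the dissipation factor $\|\nabla_x g\|_{2,m}$ in the bad term is $3/2<2$, so Young's inequality closes a linear Gronwall inequality. If this works it is cleaner and gives a \emph{stronger} (Lipschitz) stability estimate; your field estimates $\|J^0_u\nabla_x\Psi\|_{L^2_x}\lesssim c_T u^{-1/2}\|g\|_{2,m}$ and $\|J^0_u\nabla_x\Psi\|_{H^1_x}\lesssim c_T u^{-1/2}\|g\|_{L^2_m(H^1)}$ are exactly what Lemmas \ref{lem:phireg}, \ref{lem:rhoreg} and \ref{lem:boundL} give, and the vanishing of the transport term and the sign of the diffusion terms are as in the paper.

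The one step you assert rather than prove is the $L^4(0,\tau^*)$ integrability of $b(t)=\bigl(\int(1+u^2)\|f^2(\cdot,u)\|_{L^4_x}^2\,du\bigr)^{1/2}$, and this is a genuine gap as written: contrary to your closing remark, the weight $m(u)\sim 2\pi u$ does \emph{not} fully compensate the singularity here. After your Cauchy--Schwarz in $u$ the weight left on $\|f^2\|_{L^4_x}^2$ is $(1+u^2)\,du$ with \emph{no} factor of $u$, and near $u=0$ none of the propagated norms ($\|f\|_{L^2_m(L^4)}$ from Lemma \ref{lem:disp}, the energy class of Theorem \ref{thm:existence}) controls $\int_0^1\|f^2\|_{L^4_x}^2\,du$: the naive route $\|f^2\|_{L^4_x}^2\le C\|f^2\|_{L^2_x}\|f^2\|_{H^1_x}$ followed by Cauchy--Schwarz against the $u\,du$-weighted norms produces the divergent integral $\int_0^1 du/u$. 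The fix is the same device the paper uses for its analogous quantity $\|\nabla_x f_2\|^2_{2,m/u}$, namely the integration by parts in the two-dimensional variable $\au$ (inequality \eqref{eq:trick}), which trades the missing power of $u$ for one $\au$-derivative; applied to $\|f^2\|_{L^4_x}^2$ this requires an $L^4_x$ bound on $\partial_u f^2$, obtained again by Ladyzhenskaya from the mixed second derivatives $\nabla_x\partial_u f^2$ supplied by the dissipation of Lemma \ref{lem:grad_xf} on $[0,\tau^*]$. With that bookkeeping one does get $b^4\lesssim \|f^2\|_{L^2_m(L^4)}^2\,\|\partial_u f^2\|_{2,u}\,\|\nabla_x\partial_u f^2\|_{2,u}\in L^1(0,\tau^*)$, so your argument can be completed --- but this verification, not the field estimate, is where the small-$u$ degeneracy actually bites, and it must be carried out explicitly. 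A minor further remark: in your scheme $b$ depends only on the reference solution $f^2=f$, so the hypothesis $\sup_n\|f^n_i\|_{L^2_m(L^4)}<\infty$ plays no role in your stability proof, whereas it is genuinely used in the paper's Osgood argument; you should either drop the claim that it controls your Gronwall constants or explain where it actually enters.
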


This local result has some more consequence when relating it to the
bound on $\nu \int_0^T \|\nabla_x f\|_{2,m}^2 \,dt \leq \| f_i\|_{2,m}^2
+ C(T) \|f_i\|_2^2$ satisfied by any solution in the sense of
\ref{thm:existence}. The last bound implies that 
$\|\nabla_x f\|_{2,m}$ is almost surely finite. The local result
implies more : that the norm of gradient may blow up only on a closed and
negligeable set, of $4/5$-capicity zero..... 

%

%
%

\medskip
 In the next section the diffusive
operator of \eqref{eq:gyroFP2D} is rigourously derived from  a linear
Vlasov-Fokker-Planck equation in the limit of large magnetic field. In the third
section, some useful lemmas are established, proving regularizing properties of
the gyro-average, global preservation of some weighted norm of $f$, the short
time preservation of the $u(1+u^2)$-moment of $\nabla_x f$ by the system
\eqref{eq:gyroFP2D}-\eqref{eq:elecneutr}), and  controlling the electric
potential by the physical density. This allows to prove the global existence 
(Theorem \ref{thm:existence}) of solutions to the Cauchy problem in the fourth section
and their short time uniqueness and stability
(Theorem \ref{thm:uniqueness}) in the fifth section. Finally some useful properties of the first
Bessel function $J^0$ are proven in the appendix.

%
%
%
%
\sectionnew{Derivation of the gyro-Fokker-Planck operator} \label{sect:dif}

In that section, we rigorously justify the form of the Fokker-Planck appearing in the right-hand side of \eqref{eq:gyroFP2D}. The usual collision operator for plasmas is the nonlinear Landau operator originally introduced by Landau \cite{Landau}. Because of its complexity, simplified collision operators have been introduced. An important physical litterature exists on the subject, also in the gyro-kinetic case (See \cite{Brizard04} and the references therein). In this paper we choose the simplest possible operator possible, namely a linear Fokker-Planck operator. The reasons of this choice are :

 - Its simplicity will allow to focus on the other difficulties of the model,
 
 - The fact that physicists studying gyro-kinetic models for the core of the plasma mainly assume that the dynamics stays close to equilibrium, in which case a linear approximation of the collision operator is relevant.
 
 - The aim of the paper is not a precise description of collisions. In fact, even if they exists in tokamaks, being needed to produce energy, their effect is small compared to the turbulent transport. However, we are interested by their regularizing effect, since the electro-neutrality equation \eqref{eq:elecneutr} do  not provide enough regularity to get a well-posed problem. This is a major difference to the Poisson equation setting.

\medskip
We start from a simple model for a $3D$ plasma, i.e. a linear Vlasov-Fokker-Planck equation with {an external electric field}, an external uniform magnetic field and linear collision and drift terms, and obtain in the limit of  large magnetic field  a 3D (in position) equation analog to \eqref{eq:gyroFP2D}. In particular, we show  that a usual linear Fokker-Planck term on the speed variables turns into an equation with diffusion terms both in space and Larmor radius variables in the limit. 

Precisely, for any small parameter $\ep >0$ we study the distribution
$f_\ep(t,x,v)$ of ions submitted to an exterior electric field $E(t,x)$
(independent of $\ep$) and an uniform magnetic field $B_\ep=(1/\ep,0,0)$. We
also model collisions (with similar particles and the others species) by a
simple linear Fokker-Planck operator. To avoid any problem with possible
boundary collisions, which are really hard to take into account in gyro-kinetic
theory, we assume that $(x,v) \in \T^3 \times \R^3$, where $\T ^3$ is the 3D
torus. When the scale length of all the parameters are well chosen (in
particular the length scale in the direction perpendicular to the magnetic field
should be chosen of order $\ep$ times the length scale in the parallel
direction, we refer to our previous work  \cite{GheHauNou09} for more details on
the scaling), the Vlasov equation $f_\ep$ satisfies is
\begin{equation} \label{eq:vlares}
\frac{\partial f}{\partial t} +   v_{_{\parallel}} \partial_{x_{_{\parallel}}} f + E \cdot \nabla_v f  +\frac{1}{\ep}(v_\perp \cdot \nabla_{x_\perp} f + v^\perp \cdot \nabla_{v_\perp} f )= \diver_v(\beta v f_\ep) + \nu \Delta_v f_\ep \,,
\end{equation} 
where $\beta,\nu$ are two positive parameters, the subscript $\parallel $ (resp. $\perp$) denotes the projection on the direction parallel (resp. on the plane perpendicular) to $B$, and the superscript $\perp$ denotes the projection on the plane perpendicular to $B$ composed with the rotation of angle $\pi/2$. In others words if $v=(v_1,v_2,v_3)$,
\[
v_\perp  = (v_1,v_2,0), \quad v_\para =(0,0,v_3), \quad v^\perp = (-v_2,v_1,0) \, .
\]

\medskip
The next results require the additional notation,
\begin{equation} \label{eq:J0tilde}
\tilde{J}^0_u g(x_g,\rho_{_L},v_{\parallel}) = \frac{1}{2\pi} \int_0^{2\pi} g(x_g + u e^{i\varphi_c}, \rho_{_L} e^{i(\varphi_c-\frac{\pi}{2})} + v_{\parallel} e_{\parallel}) \,d\varphi_c \, ,
\end{equation}
which is a gyro-average performed in phase space, that will be used as an initial layer to adapt the initial condition to the fast Larmor gyration.

\begin{theorem} \label{thm:FPgyro}
Let $E \in L^\infty_t(L^2)$ and $f_\ep$ be a family of solutions to equation \eqref{eq:vlares} with initial condition $f_i \in L^2$ satisfying \mbox{$\sup_t \| f_\ep(t)  \|_2 \leq \|f_i\|_2$}. Then the family $\bar{f}_\ep$ defined by
\[ \label{eq:cov}
\bar{f}_\ep (t,x_g,v) = f(t,x_g + v^\perp,v)
\]
admits a subsequence that converges in the sense of distributions towards a function $\bar{f}$ depending only on \mbox{$(t,x_g,u=|v|,v_\para)$} and solution to
\begin{equation} \label{eq:FPlim} 
\begin{split}
\partial_t \bar{f}   + v_{\parallel} \, \partial_{x_{\parallel}} \bar{f} + J^0_u E_{\parallel} \, \partial_{v_{\parallel}} \bar{f} + & (J^0_u E)^\perp \cdot \nabla_{x_g} \bar{f} =   \\ 
& \beta( v_\para \partial_{v_\para} \bar{f}  + u \partial_u \bar{f} + 3 \bar{f}) + \nu \left( \Delta_{\xg_\perp} \bar{f} + \frac1 u  \partial_u ( u \partial_u \bar{f} ) \right)\, ,
\end{split}
\end{equation} 
in the sense of distributions with the weight $u$, with the initial condition $\tilde{J}^0_u(f^0)$.
\end{theorem}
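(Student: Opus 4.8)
The plan is to derive the limit equation \eqref{eq:FPlim} as a large-magnetic-field (gyrokinetic) limit of the Vlasov-Fokker-Planck equation \eqref{eq:vlares}, using the change of variables $\bar{f}_\ep(t,x_g,v)=f(t,x_g+v^\perp,v)$ to move into gyro-coordinates where the fast rotation term becomes a single rotation operator. **First I would** establish the relative compactness of the family $\bar{f}_\ep$: the hypothesis $\sup_t\|f_\ep(t)\|_2\le\|f_i\|_2$ gives uniform $L^\infty_t L^2$ bounds, and testing \eqref{eq:vlares} against $f_\ep$ yields a uniform bound on $\nu\int\|\nabla_v f_\ep\|_2^2$ from the dissipation (the drift and rotation terms being skew-symmetric and thus energy-neutral). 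This provides weak-$*$ compactness in $L^\infty_t L^2$ and weak compactness of $\nabla_v f_\ep$ in $L^2_{t,x,v}$, enough to extract a subsequence converging weakly to some $\bar{f}$ and to pass to the limit in the linear diffusion and drift terms. The change of variables converts $\Delta_v$ into the combined operator $\Delta_{x_{g,\perp}}+\frac1u\partial_u(u\partial_u\cdot)$ and turns $\mathrm{div}_v(\beta v f)$ into the Fokker-Planck terms with the gyro-average structure; these identities should be verified by a careful computation in the rotated frame.

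**The crucial step** is showing that the limit $\bar{f}$ is independent of the gyro-phase $\varphi_c$, i.e. depends only on $(t,x_g,u=|v|,v_\para)$. This is forced by the singular term $\frac1\ep(v_\perp\cdot\nabla_{x_\perp}f+v^\perp\cdot\nabla_{v_\perp}f)$, which in the gyro-coordinates becomes $\frac1\ep\partial_{\varphi_c}$ acting as the generator of the fast rotation. Multiplying \eqref{eq:vlares} by $\ep$ and passing to the limit, the only surviving term is $\partial_{\varphi_c}\bar{f}=0$, so $\bar{f}$ lies in the kernel of the rotation, i.e. is gyro-invariant. This is exactly where the gyro-average $J^0_u$ appears: averaging the nonlinear-in-$E$ transport term $E\cdot\nabla$ over a gyro-circle produces $J^0_u E_\para\,\partial_{v_\para}\bar{f}+(J^0_u E)^\perp\cdot\nabla_{x_g}\bar{f}$, since only the $\varphi_c$-averaged part of the field survives against a gyro-invariant limit.

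**The main obstacle** I anticipate is passing to the limit in the field term $E\cdot\nabla_v f_\ep$, because after the change of variables $E$ is evaluated at the shifted point $x_g+v^\perp$ and multiplies a derivative of $f_\ep$; this is a product of two weakly converging quantities. The resolution is that $E$ is fixed (independent of $\ep$) and smooth enough ($E\in L^\infty_t L^2$), so no concentration in the field itself occurs; the weak convergence of $\nabla_v\bar{f}_\ep$ against the fixed test-function-like factor $E(t,x_g+v^\perp)$ passes to the limit, and the gyro-averaging $J^0_u$ emerges precisely because the limit is constant along gyro-circles so only the average of $E$ over each circle is seen. **Finally I would** handle the initial condition: the fast rotation instantaneously gyro-averages the data through an initial layer, so the natural initial datum for $\bar{f}$ is $\tilde{J}^0_u(f_i)$ with $\tilde{J}^0_u$ defined in \eqref{eq:J0tilde}; this requires a short-time boundary-layer analysis showing that $\bar{f}_\ep(0,\cdot)$ relaxes to its gyro-average on the $O(\ep)$ timescale, which one typically makes rigorous by testing against gyro-invariant test functions and controlling the layer contribution.
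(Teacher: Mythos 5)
Your proposal follows essentially the same route as the paper: the change of variables to gyro-coordinates, weak-$*$ compactness from the uniform $L^2$ bound, multiplying the equation by $\ep$ to force $v^\perp\cdot\nabla_v\bar f=0$ (gyro-phase independence of the limit), and then passing to the limit in the weak formulation against gyro-invariant test functions so that the transformed Laplacian reduces to $\Delta_{x_{g\perp}}+\frac1u\partial_u(u\partial_u\cdot)$ and the cross terms average out in $\varphi$. The only small inaccuracy is the claim that the drift term $\diver_v(\beta v f)$ is energy-neutral (it produces a factor $e^{3\beta t}$ in the $L^2$ estimate), but this is harmless since the $L^2$ bound is assumed in the hypothesis and the limit passage only needs weak convergence against fixed test functions.
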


\begin{remark}
The reason for the change of variables is that the $1/\ep$-term in equation \eqref{eq:vlares} induces a very fast rotation in the perpendicular direction both in the $x$ and $v$ variables,
\[
v(t) = v^0 e^{i t/\ep} \, ,  \qquad x(t) = x^0 + v^{0 \perp} + v^0 e^{i( t/\ep - \pi/2)} \, .
\]
But in the gyro-coordinates this fast rotation is simply described by a rotation in $v$,
\[
v(t) = v^0 e^{i t/\ep} \, ,  \qquad x_g(t) = x_g^0.
\]
\end{remark}
\begin{remark}
The final diffusion appears in all dimensions except the $\xg_\para$ one.  It does not mean that there is no regularization in that direction. Indeed, the models have diffusion in $v_\para$, which after some time regularize in the $\xg_\para$ direction. This mechanism is well known for the Fokker-Planck equation (see for instance \cite{bouchut}).  However, we are not able to prove this phenomena in the non-linear setting because the electric field of the model lacks regularity.  This is the reason why we will only study the $2D$ model. 
\end{remark}

\begin{proof}[Proof of Theorem \ref{thm:FPgyro}]
We proved in a previous work \cite{GheHauNou09} that, provided $f^0 \in L^2$ and
$E \in L^1_t(W^{1,2}_x)$, a subsequence of $f_\ep$  solutions of
\eqref{eq:vlares}  without the collision term converges towards a solution of
\eqref{eq:FPlim} without the collision term.
In order to simplify the presentation, we will neglect the electric field and the parallel translation terms.  To obtain the result in full generality, the only thing to do is to add the argument given in our previous work to the one given below.  For the same reason, we shall also not treat the problem of initial conditions. 

So consider the above Vlasov Fokker-Planck equation without electric force field and parallel translation,
\begin{equation} \label{eq:vlaFP}
\partial_t f  +\frac{1}{\ep}(v_\perp \cdot \nabla_{x_\perp} f + v^\perp \cdot \nabla_{v_\perp} f ) =  \diver_v(\beta v f) + \nu \Delta_v f\, .
\end{equation} 
The first step is to use the change of variables $(x,v) \rightarrow (x_g=x+v^\perp,v)$. Since
\begin{eqnarray*}
\nabla_v f &  = & \nabla_v \bar{f} - \nabla^\perp_\xg \bar{f},  \\
\Delta_v f & = & \Delta_v \bar{f} + \Delta_{\xg_\perp} \bar{f} - 
2 \nabla_v  \cdot \nabla_\xg^\perp \bar{f}, \\
\nabla_v \cdot (v f) & = & v \cdot \nabla_v \bar{f} + 3 \bar{f} - v \cdot \nabla_\xg^\perp \bar{f} \, ,
\end{eqnarray*}
equation \eqref{eq:vlaFP} becomes 
\begin{equation} 
\partial_t \bar{f}_{\ep}  +\frac{1}{\ep}v^\perp \cdot \nabla_v\bar{f}_{\ep} =  -\beta \Big( v\cdot \nabla _v\bar{f}_{\ep} +3\bar{f}_{\ep} -v\cdot \nabla _{x_g}^{\perp }\bar{f}_{\ep}\Big) \\
+ \nu \Big( \Delta_v \bar{f}_{\ep}+\Delta _{x_g}\bar{f}_{\ep}-2\nabla _v\cdot \nabla _{x_g}\bar{f}_{\ep}\Big) \, .
\end{equation} 
By hypothesis $\bar{f}_\ep $ is bounded in $L_t^\infty(L^2_{x;v})$. Therefore, at least a subsequence of  $(\bar{f}_\ep )$ converges weakly to some $\bar{f} \in L_t^\infty(L^2)$. Passing to the limit in \eqref{eq:vlaFP}, it holds that
\[
v^\perp \cdot \nabla _v\bar{f} = 0 \, ,
\]
since all the other terms are bounded. For $v=(u e^{i\varphi},v_\para)$ where $\varphi$ is the gyro-phase, the previous equality means that $\bar{f}$ is independent of the gyro-phase (in the sense of distribution and thus as a $L^2$ function). 

Equation \eqref{eq:vlaFP} tested against a smooth function $g$ independent of the gyro-phase writes
\begin{equation} \label{eq:FP2}
\int \bar{f}_\ep \left( \partial_t g    -  \beta( v \cdot \nabla_v g  - v \cdot \nabla_\xg^\perp g) - \nu (\Delta_v g + \Delta_{\xg_\perp} g - 
2 \nabla_\xg^\perp  \cdot \nabla_v g) \right) \, dx_gdv = 0 \, .
\end{equation} 
We may also pass to the limit when $\ep $ tends to zero in this equation and obtain that the same equality holds for $\bar{f}_\ep$ replaced by $\bar{f}$, considered as a function defined on $\T^3\times \R^3$.

For the change of variable $v=(u e^{i\varphi}, v_\para)$, 
\[ \nabla_v g =(e^{i \varphi}  \partial_u g + i e^{i \varphi}  \partial_\varphi g , \partial_{v_\para}). \] 
Hence, for any function $g$ independent on the gyrophase $\varphi$, it holds that
\begin{eqnarray*}
&&\Delta_\vg g  = \partial^2_{v_\para} g + \frac{1}{u}\partial_u (u \partial_u g) \, , \\
 &&(\nabla_\xg^\perp \cdot \nabla_\vg) g =  \nabla_\xg^\perp \cdot ( e^{i\varphi} \partial_u g ) =  e^{i\varphi} \cdot \nabla_\xg^\perp \partial_u g, \\
 && v \cdot \nabla_v g = v_\para \partial_{v_\para} g + u  \partial_u g \, .
\end{eqnarray*}
The other terms appearing in \eqref{eq:FP2} remain unchanged. Then, 
\begin{equation} \label{eq:FPuphi}
\begin{split}
\int \bar{f} \Big( \partial_t g    -  \beta( v_\para \partial_{v_\para} g + u  \partial_u g &  - u e^{i \varphi} \cdot \nabla_\xg^\perp g)- \nu (\partial^2_{v_\para} g + \frac{1}{u}\partial_u (u \partial_u g) \\ 
& + \Delta_{\xg_\perp} g - 
 2 e^{i\varphi} \cdot \nabla_\xg^\perp \partial_u g) \Big) \, dx_g dv_\para 2\pi u du d \varphi = 0 \, .
\end{split}
\end{equation} 
Since $\bar{f}$ is independent of $\varphi$, performing the integration in $\varphi$ first makes the term containing $\varphi$ vanish. So the function $\bar{f}$ of the five variables $(x_g,u,v_\para)$ satisfies
\begin{equation} \label{eq:FPuphi2}
\int \bar{f}  \Big( \partial_t g    -  \beta(  v_\para \partial_{v_\para} g +   u  \partial_u g  ) 
  -\nu (\partial^2_{v_\para} g +  \frac{1}{u}\partial_u (u \partial_u g) + \Delta_{\xg_\perp} g
) \Big) \, dx_g dv_\para u du = 0 \, .
\end{equation} 
It exactly means that $\bar{f}$ satisfies the equation 
\begin{equation}
\partial_t \bar{f}   =   \beta( v_\para \partial_{v_\para} \bar{f}  + u \partial_u \bar{f} + 3 \bar{f}) + \nu \left( \partial^2_{v_\para} \bar{f} + \Delta_{\xg_\perp} \bar{f} + \frac1 u  \partial_u ( u \partial_u \bar{f} ) \right)\, ,
\end{equation} 
in the sense of distributions with weight $u$. It is the equation \eqref{eq:FPlim} without parallel transport nor electric field.
\end{proof}


If we look at solutions of this equation invariant by translation in the direction of $B$, we exactly get the 2D-model announced in the introduction. In fact, if $\bar{f}$ is a solution of \eqref{eq:FPlim}, then
\[
f(t,x,u)= \int \bar{f}(t,x,u,v_\para ) \,dv_\para \, 
\]
is a solution of  \eqref{eq:gyroFP2D}. Such an assumption on $f$ is reinforced by experiments and numerical simulations, where it is observed that the distribution of ions is quite homogeneous in $x_{_\parallel}$.

\sectionnew{Some useful lemmas}
\label{sect:apriori}

We prove here some a priori estimates useful for the proof of our theorem.
In order to simplify the proof of some of the following Lemmas, we sometimes uses the following  formulation of \eqref{eq:gyroFP2D} with the genuine two-dimensional velocity variable. 
Denote by $\tf (t,x,\au) = f(t,x,|\au|)$, $\au \in \R^2 $. It is solution (in the sense of
distribution with usual duality) of the following equation with $4D$ in space
and velocity variables
\begin{equation} \label{eq:VFP4D}
\partial_t \tf -\nabla _x^\perp (J^0_{|\au|}\Phi )\cdot \nabla _x \tf = \nu ( \Delta _x \tf+ \Delta_\au \tf ) + \beta ( 2 \tf  +  \au \cdot \nabla_\au \tf) .
\end{equation}
Heuristically, radial in $\au$ solution of equation \eqref{eq:VFP4D} is a solution of \eqref{eq:gyroFP2D}.  We can state for instance a precise Lemma in the case where $\phi$ is fixed and smooth.  

\begin{lemma} \label{lem:3Deq4D}
For a fixed smooth potential $\Phi$, $f$ is the unique solution of
\eqref{eq:gyroFP2D} with initial condition $f_i$ if and only if $\tf$ is
the unique solution of \eqref{eq:VFP4D} with intial condition $\tf_i$.
\end{lemma}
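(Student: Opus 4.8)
The plan is to exploit the fact that the map $\au \mapsto |\au|$ sends the $4D$ equation \eqref{eq:VFP4D} to the $3D$ equation \eqref{eq:gyroFP2D} precisely because the differential operators on the right-hand side respect radial symmetry in $\au$. The key algebraic identity is that for a radial function $\tf(t,x,\au) = f(t,x,|\au|)$ with $u = |\au|$, the full two-dimensional Laplacian reduces to the radial part, namely $\Delta_\au \tf = \frac{1}{u}\partial_u(u \partial_u f)$, and the drift term becomes $\au \cdot \nabla_\au \tf = u\,\partial_u f$. Since $J^0_{|\au|} = J^0_u$ depends on $\au$ only through its modulus, and the transport term $\nabla_x^\perp(J^0_{|\au|}\Phi)\cdot\nabla_x \tf$ acts only in the $x$ variable, every term in \eqref{eq:VFP4D} is radial in $\au$ as soon as the datum $\tf_i$ is. Matching these expressions term by term against \eqref{eq:gyroFP2D} (with the sign convention $\nabla_x^\perp = -(\nabla_x)^\perp$ accounting for the $(J^0_u\nabla_x\Phi)^\perp$ versus $\nabla_x^\perp(J^0_u\Phi)$ notation) shows that a radial solution of \eqref{eq:VFP4D} projects to a solution of \eqref{eq:gyroFP2D}.

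First I would treat the forward direction: given the unique solution $f$ of \eqref{eq:gyroFP2D}, set $\tf(t,x,\au) := f(t,x,|\au|)$ and verify by the chain rule that $\tf$ solves \eqref{eq:VFP4D} in the sense of distributions, using the identities above to rewrite the $u$-derivatives as $\au$-derivatives. Here one must be careful that the duality in \eqref{eq:VFP4D} is the usual $L^2(dx\,d\au)$ pairing, whereas \eqref{eq:gyroFP2D} uses the weighted duality with $2\pi u\,du$; the factor $2\pi u$ is exactly the Jacobian of polar coordinates $d\au = u\,du\,d\varphi$ after integrating out the angle $\varphi$, so testing \eqref{eq:VFP4D} against a radial test function recovers the weighted formulation of \eqref{eq:gyroFP2D}. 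This change of the duality is the source of the remark in the introduction that $(1/u)\partial_u(u\partial_u\,\cdot)$ is self-adjoint for the weight $u$, which is what makes the two diffusion operators match.

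For the reverse direction, I would take the unique solution $\tf$ of \eqref{eq:VFP4D} with radial datum $\tf_i$ and argue that it must itself be radial in $\au$: because \eqref{eq:VFP4D} is invariant under rotations of the $\au$ variable (the transport coefficient $J^0_{|\au|}\Phi$ is rotation-invariant and the diffusion $\Delta_\au$ commutes with rotations), any rotation of a solution is again a solution with the same rotated datum. Since the datum is radial, uniqueness for \eqref{eq:VFP4D} forces $\tf$ to coincide with each of its rotations, hence to be radial; one can then define $f(t,x,u)$ by $\tf(t,x,\au)=f(t,x,|\au|)$ and read off \eqref{eq:gyroFP2D}.

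The main obstacle I anticipate is justifying the radial symmetry of $\tf$ rigorously at the level of weak solutions, which hinges entirely on the \emph{uniqueness} statement for \eqref{eq:VFP4D} at fixed smooth $\Phi$: the rotation-invariance argument is vacuous without it. For fixed smooth $\Phi$ the equation \eqref{eq:VFP4D} is a linear parabolic equation with smooth bounded drift and constant nondegenerate diffusion $\nu(\Delta_x+\Delta_\au)$, so uniqueness follows from a standard energy estimate in $L^2$ (multiply by $\tf$, integrate, and absorb the divergence-free transport term, which vanishes since $\nabla_x^\perp(J^0_{|\au|}\Phi)$ is divergence-free in $x$); the analogous $L^2_u$ energy estimate gives uniqueness for \eqref{eq:gyroFP2D}. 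Once both uniqueness statements are in hand, the equivalence is immediate from the term-by-term matching above.
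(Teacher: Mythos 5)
Your proposal is correct and follows essentially the same route as the paper, whose proof is a one-line appeal to uniqueness of solutions to \eqref{eq:VFP4D} (for which it cites a standard parabolic reference rather than your energy estimate) together with conservation of radial symmetry. You simply flesh out what the paper leaves implicit: the polar-coordinate identities matching the two equations for radial functions, and the rotation-invariance-plus-uniqueness argument that propagates radial symmetry.
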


{ \bf Proof of the Lemma \ref{lem:3Deq4D} }:
The proof relies on the uniqueness of the solution to \eqref{eq:VFP4D} (See
\cite{Ladyzen} and the conservation of the radial symmetry of the solution.
\cqfd


%

%
%
\subsection{Regularizing properties of the gyro-average.}

  In this section, some regularizing property of the gyro-average operato are
proven. They are based on the fact that $\hat{J^0} \sim k^{-\frac12}$ for large
$k$ (the precise bound are proved in \ref{App:A}), which implies that $J^0  $
maps $H^s$ onto $H^{s+\frac{1}{2}}$. It is important since the formula \eqref{eq:elecneutr} giving the gyro-averaged
potential in terms of the distribution $f$ involves two gyro-averages, and thus
a gain of one derivative for the gyro-averaged potential w.r.t. $f$. However,
the regularizing properties of $J^0_u$ are bad for small $u$, which raises
difficulties. 

The first lemma of this section gives the regularity of the
gyro-averaged potential in term of the potential $\Phi$. The second one gives
the regularity of the density $\rho$ in terms of the distribution $f$. We will
need the two following definitions before stating it.

\medskip
\begin{definition} Let $f$ be a measurable function defined on $\Omega$. Denote
by
	\[
	\|f\|_{L^2_m(H^s)} = \left( \int  \| f(\cdot,u)\|^2_{H^s} m(u) 
\,dw\right)^{\frac12}
	\]
	the norm with the weight $m(u) = 2\pi u(1+u^2)$. \\

For any $U>0$, let $F$ be a measurable function defined on $\Omega_U = \T^2 \times [0,U]$. Denote by
	\[
	\| F \|_{H^1_U} = \left(   \int_{\T^2} \! \int_0^U \Big( |f|^2 + |\nabla_x f |^2 + |\partial_u f|^2 \Big)  2\pi u \,du\right)^{\frac12}.
	\]
\end{definition}
The lemmas stating the regularity of $\Phi$ and $\rho$ are the following.
%
\begin{lemma} \label{lem:phireg}
For any $s \in \R$, $u >0$ and $\Phi$ with $0$-mean, it holds that
\begin{eqnarray*}
& i) & \| J^0_u\Phi \|_{H^s}   \leq   \| \Phi \|_{H^s}, \\
&ii) & \| J^0_u\Phi \|_{H^{s+\frac12}}  \leq  \frac{2^{\frac14}}{\sqrt u} \|\Phi \|_{H^s} \, , \\
&iii)& \| \partial_u J^0_u\Phi  \|_{H^s}  \leq  \frac 1 {\sqrt u} \|\Phi \|_{H^{s + \frac12}}.
\end{eqnarray*}
As a consequence, for any  $U>0$,
\begin{eqnarray*}
&iv)& \| J^0_u \Phi \|_{H^1_U} \leq 4 \sqrt U \|\Phi\|_{H^{\frac12}}.
\end{eqnarray*}
\end{lemma}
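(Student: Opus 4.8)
The plan is to diagonalize the operator $J^0_u$ in the Fourier basis on $\T^2$. Writing $\Phi(x) = \sum_{k \in \Z^2} \widehat{\Phi}(k)\, e^{ik\cdot x}$ and inserting the definition \eqref{eq:J0}, the inner integral $\frac{1}{2\pi}\int_0^{2\pi} e^{iu\, k\cdot e^{i\varphi_c}}\,d\varphi_c$ depends only on $|k|$ (after writing $k\cdot e^{i\varphi_c} = |k|\cos(\varphi_c - \theta_k)$) and equals the Bessel function $J_0(u|k|)$ of the first kind of order zero. Hence $J^0_u$ acts as the Fourier multiplier $\widehat{J^0_u \Phi}(k) = J_0(u|k|)\,\widehat{\Phi}(k)$, and by Plancherel every Sobolev norm becomes the weighted sum $\|J^0_u\Phi\|_{H^s}^2 = \sum_{k}(1+|k|^2)^s |J_0(u|k|)|^2 |\widehat{\Phi}(k)|^2$. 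The whole lemma then reduces to pointwise estimates on the multiplier $J_0(u|k|)$ and its $u$-derivative, for which I will invoke the bounds on the Bessel function established in Appendix~\ref{App:A}, namely $|J_0(x)| \leq 1$ together with a decay bound of the form $|J_0(x)| \leq \sqrt{2/(\pi x)}$ and the analogous bound for $J_0' = -J_1$.

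Estimate $i)$ is then immediate from $|J_0| \leq 1$. For $ii)$ it suffices to show, for every nonzero frequency, that $(1+|k|^2)^{1/2}|J_0(u|k|)|^2 \leq \sqrt{2}/u$; this is where the zero-mean hypothesis enters, since it restricts the sum to $|k| \geq 1$, so that $(1+|k|^2)^{1/2} \leq \sqrt{2}\,|k|$. I will split according to whether $u|k| \leq 1$ or $u|k| \geq 1$, using $|J_0| \leq 1$ in the first regime (where $\sqrt 2\,|k| \leq \sqrt 2/u$) and the decay bound in the second (where one gets $\frac{2\sqrt 2}{\pi u}$, and $2/\pi < 1$ gives the room). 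For $iii)$ I differentiate the multiplier, $\partial_u J_0(u|k|) = |k|\,J_0'(u|k|)$, and use $|J_0'(x)| \leq \sqrt{2/(\pi x)}$ to obtain $|\partial_u J_0(u|k|)|^2 \leq C |k|/u \leq (C/u)(1+|k|^2)^{1/2}$; summing against $(1+|k|^2)^s |\widehat\Phi(k)|^2$ reproduces the $H^{s+\frac12}$ norm of $\Phi$ with the factor $1/\sqrt u$.

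Finally $iv)$ follows by integrating the previous two bounds against the weight $2\pi u\,du$ on $[0,U]$. Writing $\|J^0_u\Phi\|_{H^1}^2 = \|J^0_u\Phi\|_{L^2}^2 + \|\nabla_x J^0_u\Phi\|_{L^2}^2$ and applying $ii)$ with $s = \frac12$ and $iii)$ with $s = 0$, the integrand is bounded by $\big((\sqrt 2 + 1)/u\big)\|\Phi\|_{H^{1/2}}^2$; the $1/u$ singularity is exactly absorbed by the weight $u$, and integrating over $[0,U]$ yields $\|J^0_u\Phi\|_{H^1_U}^2 \leq 2\pi(\sqrt 2 + 1)\,U\,\|\Phi\|_{H^{1/2}}^2$, whence the constant $4$ after bounding $\sqrt{2\pi(\sqrt 2 + 1)} < 4$.

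The only genuine work, and the main obstacle, is hidden in the Bessel bounds themselves: the multiplier $J_0(u|k|)$ is $O(1)$ for small argument and only decays like $(u|k|)^{-1/2}$ for large argument, so the gain of half a derivative in $ii)$–$iii)$ degenerates as $u \to 0$, which is precisely the source of the $1/\sqrt u$ blow-up and of the small-$u$ difficulties flagged in the introduction. Matching the explicit constants ($2^{1/4}$, $1$, $4$) requires the sharp form of these bounds and the careful splitting at $u|k| = 1$; I therefore regard the appendix estimates on $J_0$ and $J_0'$ as the real content, the remainder being Plancherel bookkeeping.
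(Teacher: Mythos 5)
Your argument is correct and follows essentially the same route as the paper: diagonalize $J^0_u$ as the Fourier multiplier $\hat J^0(u|k|)$, use $|\hat J^0|\le 1$ for $i)$, the $(u|k|)^{-1/2}$ decay together with the zero-mean restriction to $|k|\ge 1$ for $ii)$--$iii)$, and integrate against the weight $2\pi u\,du$ for $iv)$. The only cosmetic difference is that in $ii)$ you split at $u|k|=1$ where the paper combines the single bound $|\hat J^0(l)|\le(1+l^2)^{-1/4}$ from Lemma \ref{lem:boundJ} with the elementary inequality $\frac{1+|k|^2}{1+u^2|k|^2}\le\frac{2}{u^2}$ for $k\ne 0$; both yield the stated constants.
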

%
\begin{lemma} \label{lem:rhoreg} For any $s >0$, if  $\int f\,  2\pi u \,dxdu =1$ and $\rho$ is defined by \eqref{eq:defrho}, then 
\begin{equation}
\|\rho -1 \|_{H^{s+\frac12}}  \leq  2^{\frac14}  \pi \|f \|_{L^2_m(H^s)}.
\end{equation}
\end{lemma}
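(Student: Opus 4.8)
The plan is to work on the Fourier side in the $x$-variable. Since the gyro-average acts as a Fourier multiplier, $\widehat{J^0_u h}(k) = \hat{J^0}(u|k|)\,\hat{h}(k)$ where $\hat{J^0}$ denotes the scalar zero-order Bessel function, definition \eqref{eq:defrho} gives for every frequency $k\in\Z^2$ that
\[
\hat{\rho}(k) = 2\pi\int_0^\infty \hat{J^0}(w|k|)\,\hat{f}(k,w)\,w\,dw.
\]
The normalization $\int f\,2\pi u\,dx\,du = 1$ forces $\hat{\rho}(0)=1$, so that $\rho-1$ has zero mean and $\widehat{\rho-1}(k)=\hat{\rho}(k)$ for $k\neq 0$; the sum defining $\|\rho-1\|_{H^{s+\frac12}}^2$ therefore runs over $k\neq 0$ only.

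First I would apply Cauchy--Schwarz to the $w$-integral, splitting the weight so that $m(w)=2\pi w(1+w^2)$ is reconstructed on the $f$-side:
\[
|\hat{\rho}(k)|^2 \leq 4\pi^2\left(\int_0^\infty \frac{\hat{J^0}(w|k|)^2\,w^2}{m(w)}\,dw\right)\left(\int_0^\infty |\hat{f}(k,w)|^2\,m(w)\,dw\right).
\]
Since $w^2/m(w) = w/\bigl(2\pi(1+w^2)\bigr)$, the first factor is a one-dimensional integral of the Bessel multiplier against the weight $w/(1+w^2)$. Here I would invoke the pointwise estimate on $\hat{J^0}$ established in the appendix, in the sharp interpolated form $\hat{J^0}(r)^2 \leq \sqrt2\,(1+r^2)^{-1/2}$, to bound the first factor by $\frac{\sqrt2}{2\pi}\int_0^\infty \frac{w}{\sqrt{1+|k|^2 w^2}\,(1+w^2)}\,dw$.

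The decisive step is to show that this scalar integral decays like $|k|^{-1}$ with exactly the constant $\pi/2$, namely $(1+|k|^2)^{1/2}\int_0^\infty \frac{w\,dw}{\sqrt{1+|k|^2w^2}(1+w^2)} \leq \frac{\pi}{2}$ for all $|k|\geq 1$; this is where the half-derivative gain and the sharp constant originate. I would obtain it by the substitution $t=w^2$ followed by $u=\sqrt{1+|k|^2 t}$, which evaluates the integral explicitly as an arctangent and shows the supremum $\pi/2$ is approached only as $|k|\to\infty$. Combining the displays yields $|\hat{\rho}(k)|^2 \leq \sqrt2\,\pi^2\,(1+|k|^2)^{-1/2}\int_0^\infty |\hat{f}(k,w)|^2 m(w)\,dw$. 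Multiplying by $(1+|k|^2)^{s+\frac12}$, summing over $k\neq 0$, and exchanging the sum with the $w$-integral, the factors $(1+|k|^2)^{\pm1/2}$ cancel and the summation against $|\hat{f}(k,w)|^2$ reconstructs $\|f(\cdot,w)\|_{H^s}^2$, whose integral against $m(w)$ is exactly $\|f\|_{L^2_m(H^s)}^2$. This gives $\|\rho-1\|_{H^{s+\frac12}}^2 \leq \sqrt2\,\pi^2\|f\|_{L^2_m(H^s)}^2$, i.e. the announced estimate with constant $2^{1/4}\pi$ after taking square roots.

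I expect the main obstacle to be the sharp evaluation of the scalar integral, i.e. proving the clean inequality $(1+|k|^2)^{1/2}\int_0^\infty \frac{w\,dw}{\sqrt{1+|k|^2w^2}(1+w^2)}\leq \frac{\pi}{2}$, since the explicit primitive involves $\sqrt{|k|^2-1}$ and the bound must be controlled uniformly down to $|k|=1$; everything else is Fourier bookkeeping together with a direct use of the appendix's Bessel estimate. A secondary point to verify is the legitimacy of the Cauchy--Schwarz weight split and of the Fubini exchange, both justified by the finiteness of $\|f\|_{L^2_m(H^s)}$.
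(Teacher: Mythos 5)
Your proof is correct and lands exactly the stated constant $2^{\frac14}\pi$; it shares the paper's skeleton (Fourier multiplier representation of $\hat\rho(k)$, the appendix bound on $J^0$, and a Cauchy--Schwarz step engineered so that the weight $m(w)=2\pi w(1+w^2)$ falls on the $f$-side), but the key technical step is genuinely different because you reverse the order of the two main operations. The paper first applies the pointwise bound $|J^0(|k|w)|\le(1+w^2|k|^2)^{-\frac14}$ (Lemma \ref{lem:boundJ}, ii)) together with the elementary decoupling inequality \eqref{eq:calc}, $\frac{1+|k|^2}{1+w^2|k|^2}\le\frac{2}{w^2}$, which strips out all $k$-dependence at the price of a factor $2^{\frac14}$; after Cauchy--Schwarz the only scalar integral left is $\int_0^\infty\frac{dw}{1+w^2}=\frac{\pi}{2}$, independent of $k$. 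You instead apply Cauchy--Schwarz first, which forces you to estimate the $k$-coupled integral $\int_0^\infty\frac{w\,dw}{\sqrt{1+|k|^2w^2}\,(1+w^2)}$ sharply by $\frac{\pi}{2}(1+|k|^2)^{-\frac12}$. That inequality is indeed true: with $s=\sqrt{|k|^2-1}$ your arctangent evaluation reduces it to $\sqrt{s^2+2}\,\arctan s\le\frac{\pi}{2}\,s$, whose difference vanishes at $s=0$ and as $s\to\infty$ and whose derivative changes sign exactly once (from positive to negative), so it is nonnegative throughout; but establishing this uniformly down to $|k|=1$ is the one piece of genuine calculus in your argument, and it is precisely the work the paper's decoupling trick avoids. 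What your ordering buys is sharpness: the factor $2^{\frac14}$ in your final constant comes only from the lossy interpolated Bessel bound $J^0(r)^2\le\sqrt2\,(1+r^2)^{-\frac12}$ that you chose to quote; had you used the appendix's inequality ii) squared, $J^0(r)^2\le(1+r^2)^{-\frac12}$, the identical computation would yield the better constant $\pi$ in place of $2^{\frac14}\pi$, whereas the paper's route cannot improve on $2^{\frac14}\pi$ without replacing \eqref{eq:calc} by a $k$-dependent estimate of exactly the kind you prove.
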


\underline{Proof of Lemma \ref{lem:phireg} and \ref{lem:rhoreg}.}\\
Denote by $ \hat{\Phi}(k)$ the $k$ Fourier coefficient of $\Phi$. Then
\[
\| J^0_u \Phi \|_{H^s}^2 = \sum_{k=1}^\infty |J^0_u(k)|^2|\Phi(k)|^2 \leq  \sum_{k=1}^\infty |\Phi(k)|^2 = 
\| \Phi \|_{H^s}^2 \,,
\]
using the bound $\|\hat{J^0}\|_\infty \leq 1$ proved in Lemma \ref{lem:boundJ}.
It is the inequality $i)$. For the second inequality, remark that 
\begin{equation} \label{eq:calc}
\quad \frac{1+|k|^2}{1+w^2|k|^2} = \frac1 {w^2} \frac{1 + \frac1{|k|^2}}{1 + \frac1{w^2|k|^2}} \leq \frac2{w^2} \;, \quad \, k\in \Z ^*,
\end{equation}
and use it together with $ii)$ of  Lemma \ref{lem:boundJ} in
\begin{eqnarray*}
\|J^0_u\Phi  \|_{H^{s+\frac12}}^2 & = & \sum_{k \neq 0} |\hat{J^0_u\Phi }(k,u)|^2 (1+|k|^2)^{s+\frac12} \\
& = & \sum_{k \neq 0} |\hat{\Phi}(k)|^2 |\hat{J}^0(|k|u)|^2 (1+|k|^2)^{s+\frac12} \\
& \leq &  \sum_{k \neq 0} |\hat{\Phi}(k)|^2 (1+|k|^2)^s \sqrt{\frac{1+|k|^2}{1+|k|^2 u^2}} \\
& \leq & \frac{\sqrt{2}}{u} \|\Phi\|^2_{H^s}\,.
\end{eqnarray*}
For the third estimate of Lemma \ref{lem:phireg}, remark that
\[
\left( \partial_u \hat{J^0_u\Phi }\right) (k)  =  \partial_u \left( \hat{J^0}(|k|u) \hat{\Phi}(k) \right) = |k| \hat{\Phi}(k) \hat{J^0}'(|k|u) \, 
\]
and use the bound $iii)$ of Lemma  \ref{lem:boundJ}  to get
\begin{eqnarray*}
 |( \partial_u \hat{J^0_u\Phi })(k)| & \leq  &   \sqrt{\frac{|k|}u}  | \hat{\Phi}(k)|\, .
\end{eqnarray*}
From this, we obtain
\begin{eqnarray*}
 \| \partial_u (J^0_u\Phi) \|_{H^s} & \leq & \frac1{\sqrt u} \|\Phi \|_{H^{s + \frac12}}.
\end{eqnarray*}

The point $iv)$ uses the previous inequalities. First remark that the norm $\| \hspace*{0.02in}\|_{H^1_U}
$ is also equal to
\[
\| F \|_{H^1_U}=	\left( \int_0^U \Big(  \| \partial_u F(\cdot,u)\|_{L^2}^2 +  \| F(\cdot,u)\|_{H^1}^2\Big) 2\pi u \,du \right)^{\frac12} \,.
\]
Using this formulation and  $ii)$- $iii)$ leads to
\begin{eqnarray*}
\| J^0_u \Phi \|_{H^1_U}^2 & = &  \int_0^U \Big(  \| \partial_u J^0 \Phi \|_{L^2}^2 +  \| J^0 \Phi\|_{H^1}^2\Big) 2\pi u \,du \\\
& \leq & 2 \pi \| \Phi \|_{H^{\frac12}}^2\int_0^U \frac{1 + \sqrt 2} u   u \,du \leq 16 U \|  \Phi \|_{H^{\frac12}}^2,
\end{eqnarray*}
which gives the desired result and ends the proof of Lemma \ref{lem:phireg}.\\

\underline{Proof of Lemma \ref{lem:rhoreg}.}\\
Denote by $\Frho(k)$ the $k$-th Fourier term of $\rho $ with respect to the space variable, i.e.
\begin{eqnarray*}
\Frho(k) & = & 2\pi \int J^0(|k|w) \hat{f}(k,w) w \,dw.
\end{eqnarray*}
By \ref{lem:boundJ},
\begin{eqnarray*}
|\Frho(k)| & \leq & 2 \pi \int \frac{|\hat{f}|(k,w) w}{(1 + w^2 |k|^2)^{1/4}} \,dw.
\end{eqnarray*}
It follows from \eqref{eq:calc} that for $k \neq 0$, 
\begin{eqnarray*}
(1+|k|^2)^{\frac {2s +1}4} |\Frho(k)| &\leq &2^{\frac{5}{4}} \pi \int_0^\infty |\hat{f}|(k,w) (1+|k|^2)^{\frac s 2}\sqrt w \,dw   \\
&\leq &2^{\frac54} \pi \left(\int_0^\infty |\hat{f}|^2 (k,w) (1+|k|^2)^s w(1+w^2) \,dw  \right)^{1/2} \left( \int_0^\infty \frac{dw}{(1+ w^2)} \right)^{1/2}  \\
&= &2^{\frac14} \pi \left(\int_0^\infty |\hat{f}|^2 (k,w) (1+|k|^2)^s 2 \pi w(1+w^2) \,dw  \right)^{1/2} .
\end{eqnarray*}
Hence, since $\Frho(0) = \int_{\T^2} \rho(x)\,dx =1$ by mass conservation, 
\begin{eqnarray*} 
\|\rho -1\|_{H^{s +\frac12}} & \leq &  2^{\frac14}  \pi \sqrt{\sum_{k \neq 0} \left( \int_0^\infty |\hat{f}(k,w)|^2 (1+|k|^2)^{\frac s 2}  2\pi w(1+w^2) \,dw \right)}  \\
 & \leq &  2^{\frac14}  \pi \left( \int_0^\infty \|f(w)\|_{H^s}^2 2\pi w(1+w^2) \,dw \right)^{1/2} \,,
\end{eqnarray*}
and Lemma \ref{lem:rhoreg} is proved.

\cqfd

\subsection{Control of the potential by the density.}

Denote by $L_T$ the operator that maps any function $\Phi$ on $\T^2$ with
zero mean to $\frac 1T (\Phi - \Phi \ast _x H_T)$ and by $H^s_0(\T^d)$ 
the space of $H^s$ functions with zero mean. This section is devoted to a proof
of the boundedness of $L^{-1}$ from $H^s_0(\T^d)$ onto $H^s_0(\T^d)$. Recall
that in a Fourier setting (See the Appendix of \cite{GheHauNou09} for more
details), the operator $H_T = I -T L_T $ is the multiplication by
\[
\hat{H_T} (k) = \frac{2}{T} \int_0^{+\infty} J^0(ku)^2 e^{- u^2/ T} u\,du.
\]

\begin{lemma} \label{lem:boundL}
The Fourier multipliers $\hat H_T(k)$  satisfy,
\begin{equation} \nonumber
 |1 - \hat{H} (k)| \geq   \frac{|k|^2 T}4 \left( 1 - e^{-\frac 1 {|k|^2 T}}\right)\,, \quad \forall\, k\in \Z ^2 \setminus 
 \{ (0,0)\} .
\end{equation}
As a consequence, the operator $L_T^{-1}$ maps any $H^s_0$, $s\in \R$, into
itself with norm
\begin{equation}
\| L^{-1}\|_{H^s_0} \leq  c_T := \frac 4 {1 - e^{-\frac 1 T }}.
\end{equation}
\end{lemma}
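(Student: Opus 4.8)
The plan is to prove the pointwise Fourier-multiplier inequality by a direct estimate of the integral defining $\hat H_T$, and then to read off the operator bound from it by a monotonicity argument in $|k|^2$. First I would exploit that $\frac2T\int_0^\infty e^{-u^2/T}u\,du = 1$ to rewrite the symbol in a manifestly signed form,
\[
1 - \hat H_T(k) = \frac2T\int_0^\infty \bigl(1 - J^0(|k|u)^2\bigr)\,e^{-u^2/T}\,u\,du .
\]
The integrand is non-negative by point $i)$ of Lemma \ref{lem:boundJ} ($|J^0|\le 1$), so $1-\hat H_T(k)\ge 0$ and the absolute value in the statement may be dropped.

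The heart of the argument is the single pointwise lower bound
\[
1 - J^0(z)^2 \ \ge\ \tfrac14\min(z^2,1), \qquad z\ge 0 ,
\]
which I would prove using \emph{only} point $ii)$ of Lemma \ref{lem:boundJ}, namely $J^0(z)^2\le (1+z^2)^{-1/2}$, so that it suffices to bound $1-(1+z^2)^{-1/2}$ from below. For $z\le 1$ the function $y\mapsto 1-(1+y)^{-1/2}-y/4$ (with $y=z^2$) is concave, vanishes at $y=0$ and is positive at $y=1$, hence is non-negative on $[0,1]$ by concavity; this yields the branch $1-J^0(z)^2\ge z^2/4$. For $z\ge 1$ one simply has $1-(1+z^2)^{-1/2}\ge 1-2^{-1/2}>\tfrac14$, giving the constant branch. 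I expect this elementary but slightly delicate step — choosing a lower bound for $1-J^0(z)^2$ that is at once provable from the available Bessel estimates and integrates to \emph{exactly} the claimed right-hand side — to be the main obstacle; a cruder bound loses the tail contribution and falls short of the stated constant.

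Next I would insert this bound and split the integral at $u=1/|k|$, where the two branches of the $\min$ switch. Both resulting pieces are explicit Gaussian moments: the contribution of $[0,1/|k|]$ equals $\frac{|k|^2T}{4}\bigl[1-(1+\tfrac1{|k|^2T})e^{-1/(|k|^2T)}\bigr]$, while that of $[1/|k|,\infty)$ equals $\tfrac14 e^{-1/(|k|^2T)}$. The spurious term $-\tfrac14 e^{-1/(|k|^2T)}$ hidden in the first piece exactly cancels the second, leaving
\[
1 - \hat H_T(k)\ \ge\ \frac{|k|^2 T}{4}\Bigl(1 - e^{-1/(|k|^2 T)}\Bigr),
\]
which is the first inequality.

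For the consequence, note that on zero-mean functions $L_T$ is the Fourier multiplier $\frac1T(1-\hat H_T(k))$, so $L_T^{-1}$ is the multiplier $T/(1-\hat H_T(k))$ and, this acting diagonally in Fourier, $\|L_T^{-1}\|_{H^s_0}=\sup_{k\neq 0} T/(1-\hat H_T(k))$. Writing $a=|k|^2T$ and $\psi(a)=\frac a4(1-e^{-1/a})$, the bound just proved reads $1-\hat H_T(k)\ge\psi(a)$. A short computation gives $\psi'(a)=\tfrac14\bigl[1-(1+1/a)e^{-1/a}\bigr]$, and since $1-(1+x)e^{-x}>0$ for $x=1/a>0$ (it vanishes at $x=0$ and has positive derivative $xe^{-x}$), $\psi$ is increasing. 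As $k\in\Z^2\setminus\{(0,0)\}$ forces $|k|^2\ge 1$, i.e. $a\ge T$, we obtain $1-\hat H_T(k)\ge\psi(T)=\frac T4(1-e^{-1/T})$ for every $k\neq 0$, whence $T/(1-\hat H_T(k))\le 4/(1-e^{-1/T})=c_T$, proving the asserted operator bound.
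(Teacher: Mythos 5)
Your proof is correct and follows essentially the same route as the paper's: both split the integral at $u=1/|k|$, use a quadratic lower bound for $1-(J^0)^2$ near the origin and a uniform one in the tail, compute the resulting Gaussian moments, and reduce to $|k|=1$ by monotonicity. The only cosmetic difference is that the paper obtains the small-argument bound from the alternating Taylor series of $J^0$ and uses the $O(z^{-1/2})$ decay in the tail (absorbing the error terms via $2^{-1/2}<\tfrac34$), whereas your single bound $1-(J^0(z))^2\geq \tfrac14\min(z^2,1)$, derived from point $ii)$ of Lemma \ref{lem:boundJ} alone, makes the two pieces cancel exactly.
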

\begin{remark}
Lemma \ref{lem:boundL} shows  that $\| L^{-1}\| $ is bounded for small $T$, and of order $T$ for large $T$, the physical case of interest.\\
The boundedness of the spatial domain is essential.  When defined on 
the whole space $\R^2$ rather than on the torus, the operator $L^{-1}$ is not 
bounded. Its norm explodes in the low frequency range.
\end{remark}
\hspace*{0.01in}\\
\underline{Proof of the Lemma \ref{lem:boundL}}
Two bounds on $J^0(l)$ are used, namely one of the bounds of 
Lemma \ref{lem:boundJ} for $l\geq 1$ and the following 
bound given by the Taylor expansion of $J^0$ near $0$ for $l\leq 1$,
\[
 0 \leq (J^0(l))^2 \leq  1 - \frac{ l^2}{4} \,, \quad \text{if } 0\leq 
l \leq 1 \, .
\]
Consequently,
\begin{eqnarray*}
|\hat{H_T}(k) | & \leq & \frac{2}{T}  \int_0^{\frac 1 {|k|}}  \left( 1- 
\frac{(|k|u)^2}{4}\right) e^{- u^2/ T} u\,du +
\frac{\sqrt 2}{|k|T} \int_{\frac 1 {|k|}}^\infty e^{- \frac{u^2} T} \,du \\
& \leq & 2  \int_0^{w}  \left( 1- 
\frac{x^2}{4 w^2}\right) e^{- x^2} x\,dx 
+ \sqrt 2 w \int_{w}^\infty e^{-x^2} \,dx  \\
& \leq & 1 - \frac{3}{4}e^{-w^2}  - \frac{1}{4w^2} ( 1 - e^{-w^2} )  + 
\sqrt 2 w  \int_w^\infty e^{-x^2}\,dx,
\end{eqnarray*}
where $w=(|k|\sqrt{T})^{-1}$. Now, using the bounds $2^{-\frac12} < \frac34$ and
\[
w \int_w^\infty e^{-x^2}\,dx  \leq \int_w^\infty x  e^{-x^2}\,dx  = 
\frac{e^{-w^2}}{2} \,,
\]
it holds that
\[
1 - |\hat{H_T}(k) |  \geq   \frac{1}{4w^2} ( 1 - e^{-w^2} ).
\]
This is the first claim of lemma \ref{lem:boundL}
The function of $w$ in the right-hand side of the previous inequality on $|\hat{H}(k) |$ is decreasing and goes from $\frac14$ at $0$ to $0$ at $+\infty$. Consequently its minimal value are obtained for large $w$ i.e. for small $|k|$, namely $|k|=1$. Precisely ,
\[
1 - \sup_{k \neq 0} |\hat{H_T}(k) |  \geq   \frac{T}{4} \left( 1 - e^{-\frac 1 T
} \right).
\]
Since the Fourier representation of $L_T^{-1}$ is the multiplication by $ T(1 -\hat{H}(k))^{-1}$ we obtain that in any $H^s_0$, $s\in \R $,
\[
\| L_T^{-1} \|_{H^s_0}  = \sup_{ k \neq 0} \frac T {|1 -\hat{H}(k)|} \leq \frac 4 {1 -
e^{-\frac 1 T }},
\]
which is the desired result. \cqfd

%
%
\subsection{Propagation of $L^2_m$ and $L^2_m(L^4)$ norms of $f$.} \label{sec:moment}

The two following lemmas will be useful in the sequel.  

\begin{lemma}\label{lem:u-moment}
Assume that $\| f_i\|_{2,u}^2 < +\infty $. Then, any solution of
\eqref{eq:gyroFP2D} and \eqref{eq:indata}, for regular potential $\phi$, satisfies
\[
 \forall\, t>0, \quad \| f(t) \|_{2,u} \leq e^{\beta t} \| f_i \|_{2,u} \,.
\]
Assume moreover that $\| f_i\|_{2,m} < +\infty $.
Then any solution $f$ satisfies 
\begin{equation}\label{eq:u-moment}
\| f(t) \|^2_{2,m} \leq \| f_i \|^2_{2,m} + (2\nu +\beta)  \frac{e^{2 \beta t} -1 } \beta \| f_i \|^2_{2,2\pi u}  \,.
\end{equation}
with the convention that $\frac{e^{2 \beta t} -1 } \beta = 2t$ if $\beta =0$. 
\end{lemma}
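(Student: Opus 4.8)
The plan is to run a weighted energy estimate: multiply \eqref{eq:gyroFP2D} by $f$, integrate over $\Omega$ against the relevant weight, and integrate by parts. Since the statement assumes a regular potential $\Phi$, the solution $f$ is smooth enough that all the integrations by parts below are legitimate and no boundary terms survive at $u=0$ or $u\to\infty$ (at $u=0$ one uses that $f$ is the trace of a smooth radial function, so $u\partial_u f\to 0$). I would establish the two bounds in order, because the $m$-weighted estimate does not close on its own and must be fed by the $L^2_u$ one.

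For the first bound I take the $\langle\cdot,\cdot\rangle_u$ inner product of the equation with $f$. The transport term drops out: $(J^0_u \nabla_x\Phi)^\perp=(\nabla_x J^0_u\Phi)^\perp$ is divergence-free in $x$, so $\int_{\T^2}(J^0_u\nabla_x\Phi)^\perp\cdot\nabla_x f\,f\,dx=\tfrac12\int(J^0_u\nabla_x\Phi)^\perp\cdot\nabla_x(f^2)\,dx=0$ for each $u$. Integrating the drift term by parts in $u$ gives $\beta\langle u\partial_u f,f\rangle_u=-\beta\|f\|_{2,u}^2$, which together with the reaction term $2\beta\|f\|_{2,u}^2$ leaves $+\beta\|f\|_{2,u}^2$. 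Both diffusion terms are dissipative: $\nu\langle\Delta_x f,f\rangle_u=-\nu\|\nabla_x f\|_{2,u}^2$, and using the self-adjointness of $\tfrac1u\partial_u(u\partial_u\,\cdot\,)$ for the weight $u$ (noted in the introduction), $\nu\langle\tfrac1u\partial_u(u\partial_u f),f\rangle_u=-\nu\|\partial_u f\|_{2,u}^2$. Hence $\tfrac12\tfrac{d}{dt}\|f\|_{2,u}^2\le\beta\|f\|_{2,u}^2$, and Gr\"onwall yields $\|f(t)\|_{2,u}\le e^{\beta t}\|f_i\|_{2,u}$.

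For the second bound I repeat the computation with the weight $m(u)=2\pi u(1+u^2)$. The transport term again vanishes by the same divergence-free argument, and the $x$- and $u$-diffusion terms again produce $-\nu\|\nabla_x f\|_{2,m}^2$ and $-\nu\|\partial_u f\|_{2,m}^2$ — but now the extra factor $(1+u^2)$ in the weight forces a further integration by parts in the $u$-diffusion term (differentiating $1+u^2$), which leaves the non-dissipative remainder $+2\nu\|f\|_{2,2\pi u}^2$. The \emph{key algebraic point} is the drift/reaction combination: carrying the weight $2\pi u(1+u^2)$ through the $u$-integrations by parts, the top-order ($u^3$) contributions of $\beta u\partial_u f$ and of $2\beta f$ cancel exactly, and only the lower-order remainder $\beta\|f\|_{2,2\pi u}^2$ survives. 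Collecting everything gives the identity
\[
\tfrac12\tfrac{d}{dt}\|f\|_{2,m}^2=-\nu\|\nabla_x f\|_{2,m}^2-\nu\|\partial_u f\|_{2,m}^2+(\beta+2\nu)\|f\|_{2,2\pi u}^2 .
\]
Dropping the dissipative terms, inserting $\|f(t)\|_{2,2\pi u}^2=2\pi\|f(t)\|_{2,u}^2\le e^{2\beta t}\|f_i\|_{2,2\pi u}^2$ from the first bound, and integrating $\int_0^t e^{2\beta s}\,ds=\tfrac{e^{2\beta t}-1}{2\beta}$ produces exactly \eqref{eq:u-moment}, with the stated convention for $\beta=0$.

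The computation itself is routine; the two points where the argument has real content are the following. First, the $m$-weighted energy balance is \emph{not} self-contained: it generates the lower-weight norm $\|f\|_{2,2\pi u}$ on the right-hand side (both from the reaction/drift remainder and from the $(1+u^2)$-derivative in the diffusion), so the estimate only closes once coupled to the first bound — this is the reason for the two-step structure. Second, the exact cancellation of the $u^3$ terms relies on the precise coefficients $\beta u$ and $2\beta$ in \eqref{eq:gyroFP2D}; had they been otherwise, a term proportional to $\|f\|_{2,m}$ would remain on the right and one would obtain only an implicit Gr\"onwall inequality rather than the clean closed form above. A minor technical obstacle is the justification of the boundary-term-free integrations by parts at $u=0$, which is handled by the smoothness and radiality of $f$.
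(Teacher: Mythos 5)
Your proof is correct and follows essentially the same route as the paper's: a weighted energy estimate with the weights $u$ and $u(1+u^2)$, the exact cancellation of the top-order drift/reaction contributions leaving only the lower-weight remainder $(\beta+2\nu)\|f\|_{2,2\pi u}^2$, and the two-step structure in which the $L^2_u$ bound is fed into the $m$-weighted one. The only cosmetic difference is that the paper performs the computation on the lifted $4D$ equation \eqref{eq:VFP4D} with a general radial weight $k(\au)$ (which incidentally removes the boundary issue at $u=0$ that you flag, since $2\pi u\,du$ is just the radial measure $d\au$), whereas you work directly in the $(x,u)$ variables with the weighted duality $\langle\cdot,\cdot\rangle_u$.
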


\begin{lemma} \label{lem:disp}
Assume $\| f_i \|_{L^2_m(L^4)}
 < +\infty$ and $f$ is a solution of \eqref{eq:gyroFP2D} with initial
condition $f_i$ with a regular potential $\phi$. Then $f$ satisfies
\begin{equation} \label{eq:disp}
\| f(t) \|_{L^2_m(L^4)} \leq e^{(\beta +  2 \nu) t } \| f_i \|_{L^2_m(L^4)} 
\end{equation}
\end{lemma}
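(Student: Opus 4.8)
The plan is to pass to the four-dimensional formulation \eqref{eq:VFP4D}, which is legitimate by Lemma \ref{lem:3Deq4D} since the potential is regular, and to use that the measure $m(u)\,du=2\pi u(1+u^2)\,du$ becomes $(1+|\au|^2)\,d\au$ under $2\pi u\,du=d\au$. Writing $\tf(t,x,\au)=f(t,x,|\au|)$ and
\[
\Psi(t,\au)=\int_{\T^2}|\tf|^4\,dx, \qquad N(t)=\|f(t)\|_{L^2_m(L^4)}^2=\int_{\R^2}(1+|\au|^2)\,\Psi^{1/2}\,d\au,
\]
I would prove the differential inequality $\frac{d}{dt}N\le(2\beta+4\nu)\,N$ and conclude by Gronwall, since this is exactly $\|f(t)\|_{L^2_m(L^4)}\le e^{(\beta+2\nu)t}\|f_i\|_{L^2_m(L^4)}$.

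First I would differentiate, using $\frac{d}{dt}N=2\int(1+|\au|^2)\,\Psi^{-1/2}\int_{\T^2}|\tf|^2\tf\,\partial_t\tf\,dx\,d\au$, and substitute \eqref{eq:VFP4D}. The transport term drops out because $\nabla_x^\perp(J^0_{|\au|}\Phi)$ is divergence free in $x$, so $\int|\tf|^2\tf\,\nabla_x^\perp(J^0\Phi)\cdot\nabla_x\tf\,dx=\tfrac14\int\nabla_x^\perp(J^0\Phi)\cdot\nabla_x(|\tf|^4)\,dx=0$. The spatial diffusion $\nu\Delta_x\tf$ yields $-3\nu\,\Psi^{-1/2}\int|\tf|^2|\nabla_x\tf|^2\,dx\le0$ after integrating by parts in $x$. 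The zeroth order term $2\beta\tf$ contributes exactly $4\beta N$. For the velocity drift $\beta\,\au\cdot\nabla_\au\tf$ I would use $\int|\tf|^2\tf\,\au\cdot\nabla_\au\tf\,dx=\tfrac14\au\cdot\nabla_\au\Psi$ and $\Psi^{-1/2}\nabla_\au\Psi=2\nabla_\au(\Psi^{1/2})$, then integrate by parts in $\au$; since $\diver_\au\big((1+|\au|^2)\au\big)=2+4|\au|^2\ge2(1+|\au|^2)$, this term is bounded above by $-2\beta N$. Hence the drift and zeroth-order contributions together are at most $2\beta N$.

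The main obstacle is the velocity diffusion $\nu\Delta_\au\tf$, where the composition of the $L^4_x$ norm with the $L^2_\au$ weight produces a genuinely bad term. Writing $\int|\tf|^2\tf\,\Delta_\au\tf\,dx=\tfrac14\Delta_\au\Psi-3\int|\tf|^2|\nabla_\au\tf|^2\,dx$ and integrating $\frac{\nu}{2}\int(1+|\au|^2)\Psi^{-1/2}\Delta_\au\Psi\,d\au$ by parts in $\au$ produces a harmless piece $-2\nu\int\au\cdot\nabla_\au(\Psi^{1/2})\,d\au=4\nu\int\Psi^{1/2}\,d\au\le4\nu N$, together with the dangerous positive term $\frac{\nu}{4}\int(1+|\au|^2)\Psi^{-3/2}|\nabla_\au\Psi|^2\,d\au$. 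The key point is to absorb this last term into the dissipation coming from the $-6\nu\,\Psi^{-1/2}\int|\tf|^2|\nabla_\au\tf|^2\,dx$ contribution: by Cauchy--Schwarz, $|\nabla_\au\Psi|^2=\big|\!\int4|\tf|^2\tf\,\nabla_\au\tf\,dx\big|^2\le16\,\Psi\int|\tf|^2|\nabla_\au\tf|^2\,dx$, so $\Psi^{-3/2}|\nabla_\au\Psi|^2\le16\,\Psi^{-1/2}\int|\tf|^2|\nabla_\au\tf|^2\,dx$ and the bad term is at most $4\nu\,\Psi^{-1/2}\int|\tf|^2|\nabla_\au\tf|^2\,dx$ after integration, which is dominated by the available $-6\nu(\cdots)$. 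Their sum is nonpositive, so the velocity diffusion contributes at most $4\nu N$. Collecting all the contributions gives $\frac{d}{dt}N\le(2\beta+4\nu)N$, which is the claim. To make the manipulations rigorous where $\Psi$ may vanish, I would first replace $\Psi^{1/2}$ by $(\Psi+\delta)^{1/2}$, carry out the estimates, and let $\delta\to0$; the smoothness of $\tf$ for regular $\Phi$ justifies every differentiation under the integral and every integration by parts.
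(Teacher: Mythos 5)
Your proposal is correct and follows essentially the same route as the paper's proof: both work with $\alpha=\Psi=\int|\tf|^4\,dx$ in the four-dimensional formulation \eqref{eq:VFP4D}, integrate by parts against the weight $\tm(u)=1+|\au|^2$, and absorb the dangerous term $\frac{\nu}{4}\int(1+|\au|^2)\Psi^{-3/2}|\nabla_\au\Psi|^2\,d\au$ into the dissipation via the Cauchy--Schwarz bound $|\nabla_\au\Psi|^2\le 16\,\Psi\int|\tf|^2|\nabla_\au\tf|^2\,dx$, before concluding by Gronwall. Your regularization $(\Psi+\delta)^{1/2}$ matches the paper's treatment of the division by $\sqrt\alpha$; the paper additionally inserts a cutoff $\xi_U$ in $u$ to justify the integrations by parts at infinity, which you handle by appealing to smoothness.
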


\begin{remark}
A more careful analysis will show that
\[
\| f(t) \|^2_{L^2_m(L^4)} \leq \| f_i \|^2_{L^2_m(L^4)} + (2\nu +\beta)  \frac{e^{2 \beta t} -1 } \beta \| f_i \|^2_{L^2_{2\pi u}(L^4)} \,,
\]
but the simple estimate of Lemma \ref{lem:disp} will be sufficient. 
\end{remark}

\underline{Proof of Lemma \ref{lem:u-moment}}]
 Multiply equation \eqref{eq:VFP4D} by $\tf$.  Using the notations
\begin{equation}
u=|\au|,\quad g(t,u) = \frac12 \|\tf(t,\cdot,\au)\|^2_2,
\end{equation}
and integrating in the $x$ variable leads to
\begin{equation}
\partial_t g  -  \nu  \Delta_\au g  = -  \|(\nabla_x,\nabla_u)  \tf(t,\cdot,u)\|^2_2 +  \beta (  4g  + \au \cdot \nabla_\au g ) .
\end{equation}

Multiply the previous equation by $k(\au)$,  where $k$ is a smooth function on
$\R^2$  with compact support and integrate in the velocity variable $\au$ leads
to
\begin{equation*} \begin{split}
\partial_t \left( \int g(t,u) k(\au) \,d\au \right) +  \int
\|(\nabla_x,\nabla_\au) &  \tf(t,\cdot,u)\|^2_2 k(\au) \,d\au  
  = \\
   &    \int (\nu \Delta_\au k(\au) + 4 \beta k(u) - \beta \diver (k(\au)\au) 
)g(t,u) \,d\au .
\end{split} \end{equation*}
By approximation, this is still true for functions $k$ with unbounded supports.
For $k(\au)=1$, 
\[
\partial_t \left( e^{- 2 \beta t} \int g(t,u)  \,d\au \right) = - e^{- 2 \beta t} \int \|(\nabla_x,\nabla_u) \tf(t,\cdot,u)\|^2_2  \,d\au  \leq 0.
\]
Coming back to the 1D  original quantities, it means that
\begin{equation} \label{eq:calc2}
\| f(t) \|_{2,u} \leq e^{\beta t} \|f_i\|_{2,u}.
\end{equation}

For $k(\au) = \tm(u)$,  then $\Delta k = 4$
and 
\[
4 \tm(u) -  \diver (\tm(u) \au) = 2 \tm(u) - \tm'(u) u = 2 \,.
\]
Therefore,
\[
\int g(t,u) \tm(u) \,d\au  \leq \int g(0,u) \tm(u) \,d\au + 2(2\nu +\beta)  \int_0^t \int   g(s,u) \,d\au \,ds
\]
Or in other words
\[
\| f(t) \|^2_{2,m} \leq \| f_i \|^2_{2,m} + 2(2\nu +\beta)  \int_0^t \| f(s) \|^2_{2,u} \,ds \,.
\]
Using the bound of equation \eqref{eq:calc2}, we get
\[
\| f(t) \|^2_{2,m} \leq \| f_i \|^2_{2,m} + (2\nu +\beta)  \frac{e^{2 \beta t} -1 } \beta \| f_i \|^2_{2,u}  \,.
\]
with the convention that $\frac{e^{2 \beta t} -1 } \beta = 2t$ if $\beta =0$. \cqfd

\underline{Proof of Lemma \ref{lem:disp} }
  In order to simplify the presentation, we will first performed calculation without justifying every integration by parts and division. But once we obtain an a-priori result, we will explain the small adaptation needed to make it rigorous. 
 First, we denote 
\begin{eqnarray*}
\alpha (t,\au)= \int | \tf(t,x,\au)| ^4 dx = \| f \|_4^4 \,, \qquad \gamma(t,\au) = \int |\tf|^2|\nabla_\au \tf|^2\,dx 
\end{eqnarray*}
Multiply equation \eqref{eq:gyroFP2D} by $3 \signe(f) |f|^3$ and
integrating with respect to $x$ leads to
\begin{equation} \label{eq:alpha}
\partial_t \alpha  =  - 12 \nu  \int f^2 |(\nabla_x,\nabla_\au) f |^2 \,dx +
\nu \Delta_\au \alpha  + 8 \beta \alpha +  \beta \au \cdot \nabla_\au \alpha \,.
\end{equation}
Hence, dividing by $\sqrt \alpha$
\[
\partial_t \sqrt \alpha  = \frac { \partial_t \alpha }{2 \sqrt \alpha} \leq  - 6 \nu \frac \gamma {\sqrt \alpha}  + \frac{\nu\Delta_\au \alpha}{2 \sqrt \alpha}
+ 4 \beta \sqrt \alpha + \beta \frac{\au \cdot \nabla_\au \alpha}{2 \sqrt \alpha}
\]
Now, we multiply by $\tm(u)$, integrate with respect to $\au$ 
\[
 \partial_t \left( \int \sqrt \alpha \, \tm(u) d\au \right)  \leq   - 6 \nu \int \frac \gamma{\sqrt \alpha}\, \tm(u)\,d\au + \frac \nu 2 \int \frac{\Delta_\au \alpha }{\sqrt \alpha} \tm(u) d\au 
 + 4 \beta \int  \sqrt \alpha  \,\tm(u)\,d\au + \beta  \int \frac{\au \cdot \nabla_\au \alpha}{2 \sqrt \alpha} \tm(u)\,d\au
\]
With the help of some integration by parts, we get that
\begin{eqnarray*}
\int \frac{\au \cdot \nabla_\au \alpha}{2 \sqrt \alpha} \tm(u)\,d\au  & = & - 2 \int \sqrt \alpha (\tm(u) + u^2) \,d\au \\
\int \frac{\Delta_\au \alpha }{\sqrt \alpha} \tm(u) d\au  & = & - 2 \int \frac{\au \cdot \nabla_\au \alpha}{\sqrt \alpha} \,d\au +   \int \frac{|\nabla_\au \alpha|^2}{2 \alpha^{\frac32}} \tm(u) \,d\au \\
& = &  8 \int \sqrt \alpha \,d\au + \int \frac{|\nabla_\au \alpha|^2}{2 \alpha^{\frac32}} \tm(u) \,d\au  \,.
\end{eqnarray*}  
Thanks to that, the previous  inequality simplify in
\[
 \partial_t \left( \int \sqrt \alpha \, \tm(u) d\au \right)  \leq   - 6 \nu \int \frac \gamma{\sqrt \alpha}\, \tm(u)\,d\au  + \frac  \nu 4 \int \frac{|\nabla_\au \alpha|^2}{\alpha^{\frac32}} \tm(u) \,d\au + 2 (\beta + 2 \nu)  \int  \sqrt \alpha \,d\au
 \]
Next we can estimate $|\nabla_\au \alpha|$ in terms of $\gamma$. In fact by H\"older inequality
\begin{eqnarray*}
\nabla_\au \alpha &  = & \nabla_\au \left( \int f^4 \,dx \right) = 4 \int f^3 \nabla_\au f  \,dx \\
|\nabla_\au \alpha|^2 & \leq & 16 \left( \int f^4 \,dx \right)  \left( \int f^2 | \nabla_\au f|^2  \,dx\right)   = 16 \alpha \gamma
\end{eqnarray*} 
So that the second term in the right hand side of the previous inequality is controlled up to a constant to the first one. We precisely get
\begin{equation} \label{eq:ineqdiff}
 \partial_t \left( \int \sqrt \alpha \, \tm(u) d\au \right)  \leq   - 2 \nu \int \frac \gamma{\sqrt \alpha}\, \tm(u)\,d\au   + 2 (\beta + 2 \nu)  \int  \sqrt \alpha \,d\au
\end{equation}
From which we conclude easily. 

In the previous calculation, we have not justified all the integrations by part. To make the argument rigorous, a possibility is to choose a smooth function $\xi_1$ from $\R^+$ into $[0,]$ such that 
$\xi(u) =1$ if $u \in [0,1]$ and $\xi(u) = 0$ if $u \in [1,+\infty)$, and define for all $U >0$ $\xi_U(u) = \xi \left( \frac u U\right)$.  Remark that $ |U \xi_U' |_\infty \leq |\xi'|_\infty$ and  $|U^2 \xi_U'' |_\infty \leq |\xi''|_\infty$. Then, we performed the previous calculation with the weight $\tm_U = \tm \xi_U$, 
we obtain an inequality very similar to \eqref{eq:ineqdiff} 
\begin{equation} \label{eq:diffapprox}
 \partial_t \left( \int \sqrt \alpha \, \tm_U(u) d\au \right)  \leq   - 2 \nu \int \frac \gamma{\sqrt \alpha}\, \tm_U(u)\,d\au   + \left[ 2 (\beta + 2 \nu) + \frac C {U^2} \right]  \int  \sqrt \alpha \,d\au
\end{equation}
%
%

from which we get $\| f(t) \|_{L^2_{\tm_U}(L^4)} \leq e^{(\beta +  2 \nu+ \frac C {U^2}) t } \| f_i \|_{L^2_{\tm_U}(L^4)} $, which give the desired result letting $U$ going to infinity. 

The other point not rigorously justified is the division by $\sqrt \alpha$ that may be zero. However, since we have a diffusion equation, it may be proved that for $t > 0$, $\alpha>0$ everywhere.  Or we can use a family of smooth approximation of $\sqrt \cdot $. Or we can say that $\alpha + \ep$ satisfy  \eqref{eq:alpha} with a additional term that has the good sign, so that it will satisfy \eqref{eq:diffapprox}, and we will obtain the desired inequality letting $\ep$ going to zero and then $U$ going to infinity.  It is well justified since the maximum principle applies there so that any solution with non-negative initial condition remains non-negative.

%
%
\subsection{Short time estimate of the $m$-moment of $\nabla _x f$.}
\label{sec:moment-grad_xf}

The following lemma provide is central in the proof of the stability and uniqueness of the solution for short time. 

\begin{lemma} \label{lem:grad_xf}
Assume that $f$ is a solution of the system \eqref{eq:gyroFP2D}-\eqref{eq:indata} satisfying initially $\| \nabla_x f_i\|_{2,m} < +\infty$. Then there exists a constant $C^*$  and a time $\tau^*$ depending on $(T,\nu,\|\nabla_x
f_i\|_{2,m})$, such that
\[
 \|\nabla_x f_i\|^2_{2,m} + \frac \nu 2  \int_0^{\tau^*} \|
(\nabla_x,\partial_u) \nabla_x f \|^2_{2,m} \,dt \leq C^* 
\]
\end{lemma}

We also mention that the result is true if the definition of $\Phi$ in \eqref{eq:elecneutr} is replaced by another definition which still satisfies the bound given in Lemma \ref{lem:rhoreg} and \ref{lem:phireg}.  Precise bounds by below for $\tau^*$ are given at the end of the proof (only in the case $\beta=0$). 
\medskip

\underline{Proof of Lemma  \ref{lem:grad_xf}  :} 
We take the $x$-gradient of equation \eqref{eq:VFP4D}, written in $2D$ in $\au$ (with $u =|\au|$), and obtain
\begin{equation*}
\partial_t \nabla_x \tf - \nabla_x^\perp (J_{u }^0\Phi )
\nabla_{x,x}^2 \tf=  \beta(2\nabla_x \tf +  \au \cdot \nabla_\au (\tf)) + \nu  \Delta_{x, \au} (\nabla_x \tf)  - \nabla
_x(\nabla_x^\perp (J_{ u}^0\Phi ) \nabla_x \tf.
\end{equation*}
If we now multiply by $\,^t \nabla_x \tf$ on the left and
integrate in $x$, the function $h$ defined by $h(t,u) = \frac12 \int |\nabla_x
\tf|^2 \,dx$ satisfies,
\begin{equation}  \label{eq:gevol}
\partial_t g(u)  = \beta (4 g(u) + \au \cdot \nabla_\au g(u)) \nu  \Delta_{\au} g(u) - \nu \| \nabla_{x,\au} \nabla_x
\tf    \|_2^2 - \int  ~^t\nabla_x \tf \, \nabla_x (\nabla_x^\perp
J_u^0\Phi) \, \nabla_x \tf  \,dx.
\end{equation} 
We may also multiply this equation by $\tm(u )= (1+u^2)$ and integrate it in
$\au$. We obtain after that
\begin{equation} \label{eq:gevol2}
\frac12 \partial_t \|\nabla_x \tf \|_{2,\tm}^2   + \nu \| \nabla_{x,\au}
\nabla_x \tf    \|_{2,\tm} ^2  = (4 \nu + 2\beta)  \|\nabla_x \tf \|_{2,u^0}^2  -
\int \int  ~^t\nabla_x \tf \Big( \nabla_x (\nabla_x^\perp J_u^0\Phi) \Big)
\nabla_x \tf  \, \tm(u) \,dx\,d\au.
\end{equation}

To go on, we need to understand a little better the matrix $M(t,x,u) = \nabla_x
(\nabla_x^\perp J_u^0\Phi)$. First remember that $\Phi = L_T^{-1}(\rho-1)$, and
then remark that from there definition, $J^0$ and $L^{-1}$ commute with
derivation in $x$. So that our term may be rewritten $ M = J_u^0 L^{-1}
(\nabla_x (\nabla_x^\perp \rho))$. Using the bound of the Lemma \ref{lem:rhoreg}
and \ref{lem:boundL} we obtain that 
\[
\forall u>0\,, \quad 
\| M(t,u) \|_{H^1} = \| J_u^0 L^{-1} (\nabla_x (\nabla_x^\perp \rho)) \|_{H^1}
\leq \frac{2^{\frac14} c_T} {\sqrt u} \| \rho -1 \|_{H^{\frac 52}} \leq \frac{C
c_T} {\sqrt u} \| f \|_{L^2(H^2_m)} \,.
\]
Moreover, the $H^2_m$-norm of $f$ appears in the right-hand side of
\eqref{eq:gevol}.  So that we may use it to control  $M$. 

With a control on the $H^1$ norm of $M$, we do not get an infinite bound on $M$,
like $\|M\|_\infty \leq C \|f\|_{H^2}$. In that case, we will be able to use a
classical tool to conclude. But this is almost true, we are in a critical case
$(d=2$ and $p=2$) for the Sobolev imbeddings, but we still know that the square
of $M$ is exponentially integrable. Precisely, since $M$ is of
average $0$, we have for all $u >0$ the following Tr\"udinger inequality 
\[
\int_x   e^{\frac{M^2}{6 \|M\|^2_{H^1}}}\,dx \leq 2
\]
We refer to \cite{Moser} for a proof of that result.
To estimate $\int \! \int \,^t \nabla \tf M \nabla \tf (1+u^2) \,dx
du $, we first perform the integral in $x$. For this, we apply the inequality
\[
 ab \leq e^a -b + b \ln b \,, \qquad \forall\, a,b >0,
\]
to $(a,b)= \left( \left(\frac{|M|}{ 6 \|M\|^2_{H^1}}\right)^2 ,\frac{|\nabla
\tf|^2}{\|\nabla \tf\|^2_2} \right) $. The previous inequality comes
from the Legendre transform of $e^a$, as the Young inequality, so that our
application will be a log-exp analog of the H\"older inequalities. We obtain 
\begin{eqnarray*}
 \| M \nabla \tf \|_2 & = & 6 \|M \|_{H^1}
\|\nabla \tf\|_2  \left( \int  \left( \frac{|M|}{ 6
\|M\|^2_{H^1}}\right)^2
\left(\frac{|\nabla \tf|^2}{\|\nabla \tf\|^2_2}\right)^2  \,dx
\right)^{\frac12} \\
& \leq &   \frac{C c_T} {\sqrt u} \| \tf \|_{L^2_\tm(H^2)} \|\nabla \tf\|_{2}  
\left( 2 - 1 + \int 
\frac{|\nabla \tf|^2}{\|\nabla \tf\|^2_{2}} \ln \left( \frac{|\nabla \tf
|^2}{\|\nabla \tf \|^2_2} \right)  \,dx \right)^{\frac12} \,, \\
& \leq & \frac{C c_T} {\sqrt u} \| \tf \|_{L^2_\tm(H^2)} \|\nabla \tf\|_{2}   \left(
1 +  \ln \left( \frac{\|\nabla_x \tf \|^4_4}{\|\nabla_x \tf \|^4_2} \right)  
\right)^{1/2} \,, \\
& \leq & \frac{C c_T} {\sqrt u} \| \tf \|_{L^2_\tm(H^2)} \|\nabla \tf\|_{2}   \left(
1 +  \ln \left( \frac{c_s \|\nabla^2_{x,x} \tf \|^2_2}{\|\nabla_x \tf
\|^2_2} \right)   \right)^{\frac12} \,,
\end{eqnarray*}
where  we have used the Jensen inequality - precisely, for a function $g$ of
integral $1$, $\int g \ln g \leq \ln ( \int g^2)$ - in the last but one line, and
the Sobolev imbedding from $H^1$ into $L^4$ with constant $c_s$ in the
last one. The constant $C$ may change from line to line. Using this
 and Jensen inequality in the previous equation, we get
\begin{equation*}
\begin{split}
\int \! \int | \,^t \nabla & \tf M \nabla \tf|  \, \tm(u) \,dx du  
\leq  \int   \| M \nabla \tf \|_2 \|\nabla \tf \|_2 \tm(u) \,du \,, \\
 & \leq  C c_T | \tf \|_{L^2_\tm(H^2)} \int    \|\nabla \tf\|_2^2   \left( 1
+  \ln \left( {\textstyle \frac{c_s \|\nabla^2_{x,x} \tf
\|^2_2}{\|\nabla_x \tf
\|^2_2} } \right)   \right)^{\frac12} \frac{\tm(u)} {\sqrt u} \,du \,, \\ 
 & \leq C c_T \| \tf \|_{L^2_\tm(H^2)} \|\nabla f\|_{2,\frac\tm u}
\|\nabla \tf\|_{2,\tm} \left( 1+ \int \frac{\tm(u)\|\nabla \tf\|^2_2
}{\|\nabla \tf\|^2_{2,\tm} } \ln \left( \frac{ c_s\|\nabla^2_{x,x} \tf
\|^2_2}{\|\nabla_x \tf \|^2_2} \right) \,du \right)^{\frac12} \,,\\
 & \leq  C c_T \|  \tf \|_{L^2_\tm(H^2)} \|\nabla
f\|_{2,\frac \tm u} \|\nabla \tf\|_{2,\tm} \left( 1+ \ln \left(
{\textstyle \frac{
c_s \|\nabla^2_{x,x} \tf \|^2_{2,\tm}}{\|\nabla_x \tf
\|^2_{2,\tm}} }\right)\right)^{\frac12}
\end{split}
\end{equation*}
Remark that the fraction inside the logarithm is always greater than $1$ so that the square root is well defined. 

In order to get a bound on $\| \nabla_x \tf\|_{2,\frac \tm u}$, we use
that
\begin{eqnarray}
\| \nabla_x \tf \|^2_{2,\frac1 u}& = & 2 \int h(u) \frac{d\au}{|u|} =  2 \int
h(u) \diver _{ \au}\left( \frac{\au}{|u|} \right) d\au \nonumber \\
& = & - \int \nabla_\au(|\nabla_x \tf|^2)  \cdot 
\frac{\au}{|u|} \,dxd\au \leq 2 \|\nabla^2_\au \tf \|_2 \|\nabla_x \tf \|_2  \,, \label{eq:trick}
\end{eqnarray}
so that
\[
\| \nabla_x \tf\|^2_{2,\frac \tm u} =  \| \nabla_x \tf\|^2_{2,u} +
\| \nabla_x \tf\|^2_{2,\frac1 u}  \leq  2 \left( \|\nabla_x \tf \|_{2,\tm}
+ \|\nabla_\au \nabla_x \tf \|_{2,\tm} \right) \|\nabla_x f \|_{2,\tm}
\]
Therefore,
\begin{equation} \label{eq:gevol3} 
\begin{split}
\frac12   \partial_t & \|\nabla_x \tf \|_{2,\tm}   + \nu \|
\nabla_{x,\au} \nabla_x \tf    \|_{2,\tm} ^2  \leq  (4 \nu + 2\beta)  \|\nabla_x \tf
\|_{2,u^0}^2  + \\
& C c_T \| \nabla^2_{x,x} \tf \|_{2,\tm}  \|\nabla \tf\|_{2,\tm}^{\frac32} \left(
\|\nabla_x \tf \|_{2,\tm}^{\frac12} + \|\nabla_\au \nabla_x \tf
\|_{2,\tm}^{\frac12} \right)
\left( 1+ \ln \left( \frac{ c_s \|\nabla^2_{x,x} \tf
\|^2_{2,\tm}}{\|\nabla_x \tf \|^2_{2,\tm}} \right)\right)^{\frac12} \,.
\end{split} \end{equation}
We next use the inequality $\; 1 + \ln(x) \leq \frac{x^\ep}\ep$, valid for  any $\ep \in (0,1), \; x >0$, and get
\begin{equation*}
\begin{split}
   \partial_t  & \|\nabla_x \tf \|_{2,\tm}   + \nu \|
\nabla_{x,\au} \nabla_x \tf    \|_{2,\tm} ^2  \leq  (4 \nu +2\beta) \|\nabla_x \tf
\|_{2,u^0}^2  +  \ldots \\
& C(\ep) \| \nabla_{x,\au} \nabla_x \tf \|_{2,\tm}^{1+\ep}  \|\nabla_x
\tf\|_{2,\tm}^{\frac32 -\ep} \left( \|\nabla_x \tf \|_{2,\tm}^{\frac12} +
\|\nabla_{x,\au} \nabla_x \tf \|_{2,\tm}^{\frac12}\right) \,.
\end{split} \end{equation*}
with $C(\ep) = \frac{C c_T}{\ep} \sqrt{c_s}$. Now, with the temporary notations $a = \| \nabla_x \tf\|_{2,\tm}$ and $b = \| \nabla_{x,\au} \nabla_x \tf    \|_{2,\tm}$, what we need is to eliminate
all the $b$ in the right hand side, with the help of the $b$ of the left hand
side. Precisely, in the right hand side, we have the two terms
\[
b^{1+\ep} a^{2-\ep} \,, \quad \text{and } b^{\frac32 + \ep} a^{\frac32 -\ep} 
\,.
\]
We will use the Young inequalities $xy \leq \frac{x^p}p + \frac{y^q}q$, where
$\frac1p + \frac1q=1$. For the first term, with $p = \frac2{1+\ep}$ and then $q
=\frac2{1-\ep}$, and for the second with $p=\frac4{3 + 2\ep}$ and then
$q=\frac4{1-2\ep}$. We obtain the following bounds
\[
 b^{1+\ep} a^{2-\ep} \leq \frac{1+\ep}2 b^2 + \frac{1-\ep}2
a^{4+\frac{2\ep}{1-\ep}}\,, \quad \text{and } 
b^{\frac32 + \ep} a^{\frac32-\ep} \leq \frac{3+2\ep}4 b^2 + \frac{1-2\ep}4
a^{6+\frac{8\ep}{1-2\ep}} \,,
\]
valid for  $\ep <\frac12$. Taking into account the two constants $\nu$ and
$C(\ep)$ we get for $\ep
<\frac12$
\begin{equation*}
\begin{split}
\frac12   \partial_t \|\nabla_x \tf  & \|_{2,\tm}^2   + \frac \nu 2 \|
\nabla_{x,\au} \nabla_x \tf    \|_{2,\tm} ^2  \leq  (4 \nu + 2\beta) \|\nabla_x \tf
\|_{2,m}^2  +  \ldots \\
& C(\ep)^{\frac2{1-\ep}} \nu^{- \frac{1+\ep}{1-\ep}} 
\| \nabla_x \tf\|_{2,\tm}^{4+\frac{2\ep}{1-\ep}} +
   C(\ep)^{\frac4{1-2 \ep}} 
\nu^{-\frac{3+2\ep}{1-2\ep}} \| \nabla_x
\tf\|_{2,\tm}^{6+\frac{8\ep}{1-2\ep}}  \,.
\end{split} 
\end{equation*}
where $C(\ep)$ has only change from a numerical constant. With the notation $h =
\|\nabla_x \tf
\|_{2,\tm}^2$, it gives
\[
\frac12   \partial_t h \leq  (4 \nu + 2\beta)  h + C(\ep)^{\frac2{1-\ep}} \nu^{-
\frac{1+\ep}{1-\ep}} 
h^{2+\frac{\ep}{1-\ep}} + C(\ep)^{\frac4{1-2 \ep}} 
\nu^{-\frac{3+2\ep}{1-2\ep}} h^{3+\frac{4\ep}{1-2\ep}}  \,.
\]
That is a differential inequality with a growth faster than linear, that give a
solution that may
explode in a finite time. The time of explosion $\tau^*$ may be bounded below by
something depending only on $\nu, \, \ep, \, T$ and $h(0) = \| \nabla_x \tf_i
\|_{2,m}$.  Since $\ep$ may be choosen arbitrairy between $0$ and $\frac12$, $\tau^*$ depends only on $\nu$,$\beta$, $T$ and the initial value $h_0$.

Using that in the inequality \eqref{eq:gevol3}, and turning back to the original $u$ variable, we obtain the existence of a constant $C_3(T, \nu,\|\nabla_x
f_i\|_m)$ such that
\[
 \|\nabla_x f_i\|^2_{2,m} + \frac \nu 2  \int_0^{\tau^*} \|
(\nabla_x,\partial_u) \nabla_x f \|^2_{2,m} \,dt\leq C_3(T,\nu,\|\nabla_x f_i\|_{2,m})  
\]

\medskip
{ \bf A bound by below for $\tau^*$.}
In order to simplify the next two paragraphs, we assume that $\beta \leq \nu$. 
A careful analysis of the term in the right hand side of the previous equation
shows that if $\ep$ is choosen small enough so that $\nu \leq C(\ep)$, then for
$h \leq \bar h := \left( \frac \nu {C(\ep)}\right)^2$ the
dominant term is $4 \nu h$ and for $h \geq \bar h $ the
dominant term is $h^{6+\frac{8\ep}{1-2\ep}}$. 

Then, if $h(0) \geq \bar h$, the explosion time is given by the equation
\[
 \partial_t h \leq  C(\ep)^{\frac2{1-2 \ep}} 
\nu^{-\frac{3+2\ep}{1-2\ep}} h^{3+\frac{8\ep}{1-2\ep}} \,.
\]
For that equation, we get that the explosion time is larger than 
\[
 \tau^* =  C(\ep)^{- \frac4{1-2 \ep}} 
\nu^{\frac{3+2\ep}{1-2\ep}} h(0)^{- \frac{2}{1-2\ep}}
\]

In the case $h(0) \leq \bar h$, then for the early time, the equation may be
rewritten
\[
 \partial_t h \leq 12 \nu h \,,
\]
 till $h(0)  = \bar h$. It take a time greater than $T^1 = \frac1{12\nu}
\ln\left(\frac{\bar h}{h(0)}\right)$. And after that time, the explosion time
is given by the later calculation, and due to simplification it comes
$\frac C \nu$. Finally, we get an explosion time
\[
 \tau^* = \frac1{12\nu} \ln\left(\frac{\bar h}{h(0)}\right) + \frac C \nu 
\]

\medskip
{\bf Best choice for $\ep$.}
It is quite difficult to optimize that quantity in $\ep$. But, as the condition on $\ep$ are $0 < \ep < \frac12$ and $\nu \leq C(\ep) = C \frac{c_T} \ep$, we can choose 
\[
\ep  = \min \left( \frac18 , C \frac{ c_T}{\nu} \right)
\]
With that choice, we get
\begin{itemize}
\item[i)] If $\nu \leq 8 C c_T$, $\displaystyle \tau^* = \begin{cases} 
C \nu^{\frac{13}3} h_0^{-\frac83} \quad \text{if } h_0 \geq C \left( \frac \nu {c_T}\right)^2 \\
\frac C \nu \left( 1 + \ln\left( {\textstyle \frac{C \nu^2}{c_T^2 h_0 }}\right) \right)  \quad \text{ else}
\end{cases}$
\item[ii)] If $\nu \geq 8 C c_T$, $\displaystyle \tau^* = \begin{cases} 
C \nu^{\frac{13}3} h_0^{-\frac83} \quad \text{if } h_0 \geq 1 \\
\frac C \nu ( 1 - \ln h_0 ) \quad \text{ else}
\end{cases}$
\end{itemize}
It is then clear  that the value of $\tau^*$ depends only on $\nu,h_0$ and the temperature $T$. 

The case of physical interest is the first one. Since in core of tokamaks, we have a large temperature $T$ which implies a constant $c_T$ large, and a small colisionnality, in other words a small $\nu$. 
 
 \cqfd

\sectionnew{Existence of solutions.} \label{sect:existence}
In this section, we prove the existence theorem \ref{thm:existence}.
The proof will use  the following notation and a preliminary lemma. 
%
A priori estimates of the Lemma \ref{lem:u-moment} on the solution $(f,\Phi )$ to \ref{eq:gyroFP2D}-\ref{eq:elecneutr} on $[0,T]$ lead to the definition of the set $K$ of functions $f$ such that
\[
\|f(t) \|_{2,m}  \leq \sqrt M, \quad a.a. t\in 0,T,
\]
where
\[
M= \| f_i \|^2_{2,m} + (2\nu +\beta)  \frac{e^{2 \beta t} -1 } \beta \| f_i \|^2_{2,2\pi u} 
\]
For each $n >0$, we also introduce an approximation of the potential $\Phi_n$ defined for any $f \in L^2_m$ by
\begin{eqnarray}  \label{eq:phiapprox}
\Phi _n (t,x):= \sum _{|k|\leq n; k\neq 0}e^{ik\cdot x} \frac{1} {1-\hat{H}(k)}\Big( \int 2 \pi J^0_w\hat{f_n}(t,k,w)wdw-1\Big) ,
\end{eqnarray}

\begin{lemma} \label{lem:approx} 
For any $n \in \N^*$ and any $T>0$, there is a unique $f_n$ in $K\cap  L^2(0,T; H^1_u(\Omega ))$ solution to \eqref{eq:gyroFP2D} with the potential $\Phi$ replaced by  $\Phi _n= \Phi _n(f_n)$  
and initial condition $f_i$. That solution satisfy all the a priori estimate of the previous section.  
\end{lemma}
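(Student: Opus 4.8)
The plan is to solve the regularized problem by a fixed-point argument that exploits the fact that, for fixed $n$, the potential $\Phi_n$ in \eqref{eq:phiapprox} is a trigonometric polynomial of degree at most $n$ whose Fourier coefficients depend on $f$ only through finitely many moments. First I would set up the linear theory: for a given $g \in K$, define $\Phi_n = \Phi_n(g)$ by \eqref{eq:phiapprox} and read \eqref{eq:gyroFP2D} as a linear transport--diffusion equation with the frozen drift $b[g] := (J^0_u \nabla_x \Phi_n(g))^\perp$. Since $\widehat{J^0_u \Phi_n}(k) = \hat J^0(|k|u)\,\hat\Phi_n(k)$ is supported on $|k| \le n$ and $\hat J^0$ is a smooth bounded function, this drift is smooth and bounded in $(x,u)$, uniformly in time. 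Passing through the nondegenerate $4D$ lift \eqref{eq:VFP4D} via Lemma \ref{lem:3Deq4D} and invoking the standard parabolic theory of \cite{Ladyzen}, the linear equation then has a unique solution $f \in L^\infty(0,T;L^2_u(\Omega))\cap L^2(0,T;H^1_u(\Omega))$ with initial datum $f_i$.

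Next I would verify the a priori estimates for this linear solution. The frozen drift $b[g]$ is a perpendicular gradient in $x$, hence divergence free, so the transport term drops out of every energy computation exactly as in Section \ref{sect:apriori}. Consequently $f$ satisfies the conclusion of Lemma \ref{lem:u-moment}, giving $\|f(t)\|_{2,m}\le \sqrt M$, so that $f\in K$ and the solution map $\mathcal{T}\colon g\mapsto f$ sends $K$ into $K$. Moreover, truncating Fourier modes only decreases Sobolev norms, so $\Phi_n(g)$ obeys the same bounds as $\Phi$ in Lemmas \ref{lem:rhoreg} and \ref{lem:phireg}; the Remark following Lemma \ref{lem:grad_xf} then applies, and every a priori estimate of the section (Lemmas \ref{lem:disp} and \ref{lem:grad_xf}) holds for $f$ whenever its initial hypothesis is met.

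Then I would run the fixed point. The moments $\int 2\pi J^0_w\hat g(t,k,w)\,w\,dw$ are controlled by $\|g(t)\|_{2,m}$ through the Cauchy--Schwarz estimate used in the proof of Lemma \ref{lem:rhoreg}, and the multipliers $(1-\hat H(k))^{-1}$ are bounded by Lemma \ref{lem:boundL}; since only the modes $|k|\le n$ enter, $g\mapsto \Phi_n(g)$ is Lipschitz from $L^\infty(0,T;L^2_m)$ into trigonometric polynomials of degree $\le n$, on which all norms are equivalent. Subtracting the equations for $f_1=\mathcal{T}(g_1)$ and $f_2=\mathcal{T}(g_2)$, testing against $f_1-f_2$ and using the divergence-free structure, the only surviving coupling term is $\int (b[g_1]-b[g_2])\cdot\nabla_x f_2\,(f_1-f_2)\,dx$, which is bounded by $C_n\,\|g_1-g_2\|_{2,m}\,\|\nabla_x f_2\|_2\,\|f_1-f_2\|_2$. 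Integrating in time and using that $\|\nabla_x f_2\|_{L^2(0,\tau;L^2)}$ is uniformly bounded on $K$ by the energy estimate, a Cauchy--Schwarz in time produces a factor $\sqrt\tau$, making $\mathcal{T}$ a contraction in $C([0,\tau];L^2_m)$ for $\tau$ small; since the bounds are uniform, the contraction time does not shrink and one iterates to cover $[0,T]$, yielding a unique fixed point $f_n$ that is the desired solution and inherits all the a priori estimates.

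I expect the main obstacle to be precisely this contraction estimate. The difference of drifts is multiplied by $\nabla_x f_2$, which is only square-integrable in time and not bounded, so the naive attempt to bound the coupling term by a sup-in-time quantity fails; what closes the argument is the $\sqrt\tau$ gain extracted from the uniform $L^2(0,T;H^1_u)$ energy bound. As an alternative one may replace the contraction by a Schauder fixed point, using the $L^2(0,T;H^1_u(\Omega))$ bound together with the equation for $\partial_t f$ to obtain Aubin--Lions compactness of $\mathcal{T}(K)$ and continuity of $\mathcal{T}$; uniqueness, however, still requires the same difference estimate.
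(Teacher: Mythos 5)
Your proposal is correct and follows essentially the same route as the paper: freeze the drift $b[g]=(J^0_u\nabla_x\Phi_n(g))^\perp$, which for fixed $n$ is a smooth bounded field since $\Phi_n(g)$ is a trigonometric polynomial of degree $\le n$ with coefficients controlled by $\|g\|_{2,m}$; solve the resulting linear parabolic problem (the paper cites \cite{Lions-Magenes} where you cite \cite{Ladyzen} via the $4D$ lift); check that the divergence-free structure makes the solution map preserve $K$; and close with a Banach fixed point in $L^\infty(0,T;L^2_m)$, iterated over time intervals of uniform length. The one place where you diverge is the step you yourself flag as the main obstacle. You keep the coupling term in the form $\int (b[g_1]-b[g_2])\cdot\nabla_x f_2\,(f_1-f_2)\,m\,dx\,du$ and pay for the gradient on $f_2$ with the time-integrated energy bound, extracting a $\sqrt\tau$ smallness factor; this works, but requires carrying the $L^2_t(H^1_m)$ bound of the output through the contraction. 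The paper instead integrates by parts (using $\diver_x b=0$) to rewrite the term as $-\int f_2\,(b[g_1]-b[g_2])\cdot\nabla_x(f_1-f_2)\,m\,dx\,du$, bounds it by $\bar c_n\|g_1-g_2\|_{2,m}\|\nabla_x(f_1-f_2)\|_{2,m}\|f_2\|_{2,m}$, and absorbs the gradient of the difference into the viscous dissipation $-\nu\|(\nabla_x,\partial_u)(f_1-f_2)\|_{2,m}^2$ via Young's inequality; this needs only $\sup_t\|f_2\|_{2,m}\le\sqrt M$ from membership in $K$ and yields a factor $T$ rather than $\sqrt\tau$. Both closings are valid; the paper's is slightly cleaner because it never invokes a gradient bound on the output, while yours has the virtue of working even without the $\nu$-dissipation to absorb into.
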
 
\underline{Proof of Lemma \ref{lem:approx}} Let $S$ be the map defined on $K$ by $S(f)= g$, where $g$ is the solution in $K\cap  L^2(0,T; H^1_u(\Omega ))$ to \eqref{eq:gyroFP2D} with the potential $\Phi _n(f)$ and initial condition $f�$. The existence and uniqueness of $S(F)$ follows from \cite{Lions-Magenes} Thm 4.1 p 257, since $\nabla \Phi _n$ is bounded in $L^{\infty }(0,T; H^3(\T ^2 ))$ by $c_n M$ for some constant $c_n$. Then $S$ maps $K$ into $K$. Moreover,  $S$ is a contraction in $L^{\infty }(0,T;L^2_u(\Omega ))$ for $T$ small enough. Indeed, let $g_1= S(f_1)$ (resp. $g_2= S(f_2)$). By estimates very similar to the one performed in Lemma \ref{lem:rhoreg} it holds that 
\begin{eqnarray*} 
\forall  t\geq 0, \quad  \| (\Phi _n(f_1)- \Phi _n(f_2))(t, \cdot) \|_{L^{\infty }(\T ^2)}\leq  \bar{c_n}\| (f_1- f_2)(t,\cdot ) \|_{2,m},
\end{eqnarray*} 
for some constant $ \bar{c_n}$. Substracting  the equation satisfied by $g_2$ from the equation satisfied by $g_1$ and integrating over $\Omega $ leads to
\begin{equation*} \begin{split}
\frac{e^{2(2\nu + \beta)t} }{2} & \frac{d}{dt}  \left( e^{- 2(2\nu + \beta)t}\| g_1 - g_2\|_{2,m}^2 \right)   \\
&   \leq - \nu \| (\nabla_x,\partial_u)(g_1 - g_2)\|_{2,m}^2 - \int g_2 \nabla^\perp (J_u^0(\Phi_n(f_2)-\Phi_n(f_1))) \cdot \nabla (g_1-g_2)  \,m(u) \,dxdu \\
& \leq - \nu \| (\nabla_x,\partial_u)(g_1 - g_2)\|_{2,m}^2 +  \bar{c_n}\| (f_1 - f_2)\|_{2,m}\| \nabla _x(g_1 - g_2)\|_{2,m}\|g_2\|_{2,m} \\
& \leq \frac{\bar{c_n}^2}{4 \nu^2} \| (f_1 - f_2)\|_{2,m}^2\|g_2\|_{2,m}^2 \leq \frac{\bar{c_n}^2 M }{4 \nu^2} \| (f_1 - f_2)\|_{2,m}^2
\end{split} \end{equation*}
And so
\begin{eqnarray*}
\| g_1- g_2 \|_{L^{\infty }(0,T; L^2_m) } \leq c Te^{2(\nu +\beta )T}\| f_1- f_2 \|_{L^{\infty }(0,T; L^2_m) }.
\end{eqnarray*}
Hence there is a unique fixed point of the map $S$ on $[0,T_1]$ for $T_1$ small enough. The bounds used for defining $T_1$ being independent of $T_1$, a unique solution of the problem can be determined globally in time by iteration. The fact that this unique solution satisfy the a-priori estimates of the next section is clear since these estimates only depends on the bound satisfied by $\Phi$ and not on its precise form. 
\cqfd

\underline{Proof of Theorem \ref{thm:existence}}

The sequence $(f_n)$ is compact in $L^2_{loc,u}((0,T)\times \Omega )$. Indeed, it is bounded in \\
$L^{\infty }(0,T;L^2_u(\Omega_U))\cap L^2(0,T;H^1_u(\Omega_U))$ for any bounded subset $U >0$ (Recall of $\Omega_U = \T^2 \times \R^+$). It follows from the interpolation theory that  $(f _n)$ is bounded in $L^{\frac{10}{3}}_u((0,T)\times \T ^2\times U)$. Together with the boundedness of $(\nabla _ x\Phi _n(f _n))$ in $L^2_{loc,u}((0,T)\times \Omega )$, this implies that $(\frac{\partial f_n}{\partial t})$ is bounded in $W^{-1,\frac{5}{4}}_{loc,u}((0,T)\times \Omega_U )$. By the Aubin lemma \cite{Lions-Magenes}, it holds that $(f _n)$ is compact in $L^2_u((0,T)\times \Omega_U)$, so converges up to a subsequence to some function $f$ in $L^2_u$. \\
It remains to pass to the limit when $n\rightarrow +\infty $ in the weak formulation satisfied by $f _n$. A weak form of \eqref{eq:gyroFP2D}-\eqref{eq:indata}  is that for every smooth test function $\alpha $ with compact support in $[0,T[\times \Omega $,
\begin{multline} \label{eq:weakform}
\int f_i(x,u)\alpha (0,x,u) \,u\,dxdu+\int _0^t \int f _n\Big( \frac{\partial \alpha }{\partial t}+\nabla _ x^{\perp }(J_u^0\Phi _n(f_n))\cdot \nabla _x \alpha \Big) \,u\,dxduds \\
= \int _0^t \int \Big( u \nu \nabla _x f_n\cdot \nabla _x \alpha + \partial_u f_n \partial_u \alpha + \beta u^2  f_n\partial_u \alpha \Big) \,dxduds \,.  
\end{multline}
The passage to the limit in  \eqref{eq:weakform} when $n\rightarrow +\infty $ can be performed if
\begin{eqnarray*}
\lim _{n\rightarrow \infty }\int _0^t \int uf _n\nabla _ x^{\perp }(J_u^0(\Phi _n(f_n))\cdot \nabla _x \alpha dxduds= \int _0^t \int uf\nabla _ x^{\perp }(J_u^0(\Phi (f))\cdot \nabla _x \alpha dxduds.
\end{eqnarray*}
This holds since $(f_n)$ (resp. $(\nabla _ x (J_u^0(\Phi _n)(f _n)))$ strongly (resp. weakly) converges to $f$ (resp. $\nabla _ x (J_u^0(\Phi (f))$) in $L^2_{loc,u}((0,T)\times \Omega )$. And since the $f_n$ satisfy all the a priori bound, the limit $f$ also satisfies them. 
\cqfd

\sectionnew{Short time uniqueness and stability of the solution.} 
\label{sect:uniqueness}

In this section we prove the shot time uniqueness and stability theorem \ref{thm:uniqueness}
\medskip

\underline{Proof of Theorem \ref{thm:uniqueness}.} Denote by $f_1$ (resp. $f_2$) a solution to \ref{eq:gyroFP2D} for the field $\Phi_1$ (resp. $\Phi_2$), by $\delta f= f_1-f_2$ and by $\delta \Phi = J_u^0(\Phi _1-\Phi _2)$. Multiplying the equation satisfied by $(1+u^2) \delta f$ by $\delta f$ and integrating w.r.t. $(x,u)$ with the weight $u$ leads to
\begin{eqnarray*}
\frac{1}{2} \frac{d}{dt} \| \delta f\|_{2,m}^2  & \leq  & 
- \nu \| (\nabla_x,\partial_u)\delta f\|_{2,m}^2 + (4 \nu +2 \beta) \|\delta f \|_{2,u}^2+\int \delta f\nabla_x^{\perp }(\delta \Phi)\cdot \nabla_x f_2 \,m(u)\,dxdu  \\
& \leq & - \nu \| (\nabla_x,\partial_u)\delta f\|_{2,m}^2 + 4 \nu \|\delta f
\|_{2, m}^2 + \|\delta f  \nabla_x(\delta \Phi)\|_{2, um} \|\nabla_x f_2\|_{2,\frac m u }.
\end{eqnarray*}
To estimate $\|\delta f \nabla(\delta \Phi)\|_{2,u\,m }$, apply the
inequality
\[
 ab \leq e^a -b + b \ln b \,, \qquad  a,b >0,
\]
to $(a,b)= \left( \left(\frac{\nabla_x \delta \Phi}{ 6\|\nabla_x^2 \delta
\Phi\|_2}\right)^2 ,\left( \frac{\delta f}{\|\delta f\|_2} \right)^2 \right) $
for every nonnegative $u$ and apply the Tr\"udinger inequality  (See
\cite{Moser})
\begin{equation} \label{eq:exp-int}
\int_{\T^2} e^{\left( \frac{\nabla_x \delta \Phi}{6\|\nabla_x^2\psi
\|_2}\right)^2 } \,dz \leq 2 \,.
\end{equation}
Therefore, using $\|\nabla_x^2 \delta \Phi \|_2 \leq \frac{C c_T}{\sqrt u} \| \nabla
\delta f\|_{2,m}$ (Lemmas \ref{lem:rhoreg} and \ref{lem:phireg} and \ref{lem:boundL}) and the Jensen
inequality
\begin{eqnarray*}
 \|\delta f \nabla_x (\delta \Phi) \|_2^2 & = & C \|\nabla^2 \delta \Phi\|_2^2
\|\delta f\|_2^2  \int  \left(\frac{\delta f}{\|\delta f\|_2}
\right)^2
\left(\frac{ |\nabla_x \delta \Phi| }{  6 \|\nabla^2 \delta \Phi\|_2}\right)^2 
\,dx \\
& \leq & C \|\nabla^2 \delta \Phi\|_2^2 
\|\delta f\|_2^2   \left( 1+ \int \frac{ (\delta f)^2 }{\|\delta f\|_2}
\ln\left(\frac{(\delta f)^2}{\|\delta f \|_2^2}\right)\,dx \right). \\
& \leq & \frac{C c_T^2 }{u} \| \nabla_x \delta f\|_{2,m}^2
\|\delta f\|_2^2   \left( 1+ \ln \left( \frac{\|\delta f\|^4}{\|\delta f \|_2^4}
\right) \right)
\end{eqnarray*}
Integrating in $u$ with the weight $um$, it holds using again Jensen
inequality that
\begin{eqnarray*}
   \| \delta f \nabla(\delta \Phi) \|^2_{2,um} 
& \leq & 2C c_T^2 \| \nabla_x \delta f\|_{2,m}^2 \| \delta f\|_{2,m}^2
\int   \frac{\|\delta f\|_2^2 }{\| \delta f\|_{2,m}^2} \left( 1+ \ln \left(
\frac{\|\delta f\|^2_4}{\|\delta f \|_2^2}
\right)\right)  m(u)\,du \\
& \leq & C c_T^2 \| \nabla_x \delta f\|_{2,m}^2 \| \delta f\|_{2,m}^2
 \left( 1+ \ln \left( \frac{\|\delta f\|^2_{L^2_m(L^4)}}{\|\delta f \|_{2,m}^2}
\right)\right) \,. \\
\end{eqnarray*}
Consequently,
\[ \begin{split}
\frac{1}{2} \frac{d}{dt} \| \delta f\|_{2,m}^2   \leq  C c_T  \|\nabla_x \delta
f\|_{2,m}  & \| \delta f \|_{2,m} \sqrt{1+ \ln \left( \frac{\|\delta
f\|^2_{L^2_m(L^4)}}{\|\delta f \|_{2,m}^2}
\right)} \, \|\nabla f_2\|_{2,\frac m u } \\
   & - \nu \| (\nabla_x,\partial_u)\delta f\|_{2,m}^2 +  (4 \nu+ 2 \beta)  \|\delta
f \|_{2,m}^2 \,,
\end{split} \]
and finally using the inequality $\|f(t)\|_{L^2_m(L^4)} \leq e^{(\beta-2\nu)t} \|f_i\|_{L^2_m(L^4)}$ from lemma \ref{lem:disp} 
\begin{eqnarray*}
\frac{1}{2} \frac{d}{dt} \| \delta f\|_{2,m}^2   & \leq  & \frac{C c_T^2}{4\nu} 
  \| \delta f \|^2_{2,m} \ln\left(\frac{2 e \|\delta
f\|^2_{L^2_m(L^4)}}{\|\delta f  \|_{2,m}} \right) \|\nabla
f_2\|^2_{2, \frac m u}  + (4 \nu + 2\beta) \| \delta f\|_{2,m}^2 \,, \\
& \leq & \frac{C c_T^2}{4\nu} 
  \| \delta f \|^2_{2,m} \ln \left( \frac{r^2 e^{2(\beta + 2 \nu) t+ 1}}{\|\delta f  \|^2_{2,m}} \right) \|\nabla
f_2\|^2_{2, \frac m u}  + (4 \nu + 2\beta) \| \delta f\|_{2,m}^2  \,,
\end{eqnarray*}
where
\begin{eqnarray*}
r = e^{\frac12} \,\left(\| f_{1,i} \|_{L^2_m(L^4)} + \| f_{2,i} \|_{L^2_m(L^4)} \right)  \,.
\end{eqnarray*}
Defining $s(t) =  \frac 1 {r^2} \| \delta f\|_{2,m}^2  e^{-2(\beta + 2 \nu) t}$, we get
\[
\dot s (t) \leq \frac{C c_T^2}{4\nu}  \|\nabla f_2\|^2_{2, \frac m u} \, s(t) \ln \frac 1 {s(t)} \,.
\]
It follows from the Osgood lemma that
\begin{equation}\label{eq:osg}
s(t) \leq s(0)^{e^{-H(t)}}
\end{equation}
with $H(t) = \frac{C c_T^2}{4\nu}  \int_0^t \|\nabla f_2(s)\|^2_{2, \frac m u} \,ds$. We will show below that $H$ is well defined on $[0,\tau^*]$, the time defined in Lemma \ref{lem:grad_xf}. It implies
\[
\| \delta f (t) \|_{2,m} \leq e^{(\beta+2\nu)t}\, r^{1-e^{-H(t)}} \, \| \delta f (t) \|_{2,m}^{e^{-H(t)}} \,
\]
and from that inequality we get the short time uniqueness and stability. Remark that the previous calculation do not use $\nabla_x f_1$ and this is why we do not need an assumption on this quantity in the stability result.  

\medskip
It remains to prove that $H$ is bounded on $[0,\tau^*]$. In fact,
\[
\int_0^{\tau^*} \|\nabla f_2\|^2 _{2,(1+u^2)^{\frac72}} \,dt \leq 2^{\frac52} \int_0^{\tau^*} \|\nabla f_2\|^2 _{2,(1+u^7)} \,dt ,
\]
since $(1+u^2)^{\frac72} \leq 2^{\frac52} (1+ u^7)$. Moreover, $\int_0^{\tau^*} \|\nabla f_2\|^2 _{2,u^0} \,dt $ may be bounded using Lemma \ref{lem:grad_xf} and the inequality \eqref{eq:trick} (we emphasize that the form used here is slighty different because we are in here in the $1D$ setting for the variable $u$)
\begin{eqnarray*}
\int_0^{\tau^*} \|\nabla f(t)\|^2 _{2,u^0} \,dt & \leq & 2 \int_0^{\tau^*}  \|\nabla^2_{x,\au} f(t) \|_{2,u} \|\nabla_x f(t) \|_{2,u}  \\
& \leq &  \sup_{t \leq \tau^*} \|\nabla_x f(t) \|_{2,u}  \sqrt{\tau^*} \left( \int_0^{\tau^*} \|\nabla^2_{x,\au} f(t) \|_{2,u} \,dt \right)^{\frac12} \\
& \leq & C^* \sqrt{\frac{\tau^*} \nu }  \,.
\end{eqnarray*}
And $\int_0^{\tau^*} \|\nabla f_2\|^2 _{2,u^7} \,dt $ is bounded by   Lemma \ref{lem:u-moment}, with $n=3$. \cqfd

\appendix

\section{Appendix:  Some controls on the Bessel function of zero-th order.}
\label{App:A}

The first Bessel function $J^0$ is much used in this paper. Indeed, in Fourier space, $J^0_1$ that appears in the definition of the gyroaverage of the electric field, is the multiplication by $J^0$. Some properties of the function $J^0$ are given in \cite{Watson}. In this appendix, some bounds on $J^0$ and its derivative are proven.
\begin{lemma} \label{lem:boundJ}
$J^0$ satisfies the following estimates for all $k \in \R$
\begin{eqnarray*}
& i) &\qquad |J^0(k)|  \leq  \min \left( 1, \frac{1}{2^{1/4} \sqrt{ k}} \right) ,\\
& ii) &\qquad |J^0(k)|   \leq  (1+k^2)^{^{-\frac14}} \,, \\
& iii) &\qquad |(J^0)'(k)|  \leq \min \left( 1, \sqrt{\frac{2}{\pi k }} \right) \,, \\
& iv) & \qquad |(J^0)'(k)| \leq (1+k^2)^{^{-\frac14}} \,.
\end{eqnarray*}
\end{lemma}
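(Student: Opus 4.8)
The plan is to work from the standard integral representation
\[
J^0(k) = \frac{1}{2\pi} \int_0^{2\pi} e^{i k \cos\varphi}\,d\varphi = \frac{1}{2\pi}\int_0^{2\pi}\cos(k\cos\varphi)\,d\varphi,
\]
which is exactly the Fourier multiplier of $J^0_1$ appearing in \eqref{eq:J0}, together with the classical series expansion $J^0(k)=\sum_{n\ge0}\frac{(-1)^n}{(n!)^2}(k/2)^{2n}$ and the asymptotic behaviour $J^0(k)\sim\sqrt{2/(\pi k)}\cos(k-\pi/4)$ for large $k$ (see \cite{Watson}). The bounds split naturally into a trivial part valid for all $k$ and a decay part that is only useful for large $k$; the two are then combined into the uniform estimates $ii)$ and $iv)$.

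First I would establish the elementary bounds. The estimate $|J^0(k)|\le 1$ in $i)$ is immediate from the integral representation, since the integrand has modulus one. For $|(J^0)'(k)|\le 1$ in $iii)$, I would differentiate under the integral sign to get $(J^0)'(k)=-\frac{1}{2\pi}\int_0^{2\pi}\cos\varphi\,\sin(k\cos\varphi)\,d\varphi$, whose integrand is again bounded by one in modulus. The decay halves, $|J^0(k)|\le 2^{-1/4}k^{-1/2}$ and $|(J^0)'(k)|\le\sqrt{2/(\pi k)}$, come from the large-$k$ asymptotics of $J^0$ and $(J^0)'$; here the cleanest route is stationary phase applied to the oscillatory integral, the stationary points being at $\varphi=0,\pi$ where $\cos\varphi$ is critical, which produces the $k^{-1/2}$ rate with an explicit leading constant $\sqrt{2/(\pi k)}$. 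One must check that the numerical constant $2^{1/4}$ in $i)$ is consistent with this asymptotic constant and holds for all $k$ (not merely asymptotically); I expect to verify this by a direct estimate of the remainder in the stationary-phase expansion, or alternatively by quoting the sharp bound $|J^0(k)|\le\sqrt{2/(\pi k)}$ for $k>0$ known from \cite{Watson} and noting $2/\pi \le 2^{-1/2}$, so that $\sqrt{2/(\pi k)}\le 2^{-1/4}k^{-1/2}$.

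To pass to the uniform bounds $ii)$ and $iv)$, I would interpolate between the two regimes. For $|k|\le 1$ we have $(1+k^2)^{-1/4}\ge 2^{-1/4}$, so $ii)$ follows from $|J^0|\le 1$ provided $2^{-1/4}\le(1+k^2)^{-1/4}$ holds on that range, which it does; for $|k|\ge 1$ one checks $(1+k^2)^{-1/4}\ge (2k^2)^{-1/4}=2^{-1/4}|k|^{-1/2}$, so the decay bound of $i)$ gives $ii)$. The same dichotomy, comparing the constant bound $|(J^0)'|\le 1$ on $|k|\le 1$ with the decay bound on $|k|\ge 1$ and using $\sqrt{2/\pi}\le 1$ together with $(1+k^2)^{-1/4}\le 2^{-1/4}|k|^{-1/2}$, yields $iv)$.

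The main obstacle I anticipate is pinning down the sharp numerical constants in $i)$ and $iii)$ uniformly in $k$ rather than only asymptotically, since stationary phase naturally controls the leading term but leaves a remainder that must be shown not to spoil the stated constant at moderate $k$. The safest strategy is therefore to rely on the known sharp inequality $|J^0(k)|\le\sqrt{2/(\pi k)}$ for all $k>0$ from the literature and derive everything else from it by the elementary comparisons above, rather than reproving the asymptotics by hand.
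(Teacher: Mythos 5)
The elementary bounds $|J^0|\le 1$ and $|(J^0)'|\le 1$ and the large-$k$ halves of $ii)$ and $iv)$ are handled correctly, but there is a genuine gap in your derivation of $ii)$ and $iv)$ on the range $|k|\le 1$. You argue that $ii)$ ``follows from $|J^0|\le 1$'' there because $(1+k^2)^{-1/4}\ge 2^{-1/4}$; this is not a valid deduction, since $(1+k^2)^{-1/4}<1$ for every $k\ne 0$, so the trivial bound $|J^0(k)|\le 1$ cannot by itself give $|J^0(k)|\le (1+k^2)^{-1/4}$, and the decay bound $2^{-1/4}k^{-1/2}$ of $i)$ blows up as $k\to 0$ and is useless there as well. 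The inequality on $[0,1]$ is true but needs its own argument: the paper compares the alternating series $J^0(k)=\sum_j(-1)^jk^{2j}/(2^{2j}(j!)^2)$, truncated as $J^0(k)\le 1-\frac{k^2}{4}+\frac{k^4}{64}$, with the expansion of $(1+k^2)^{-1/4}$ about the origin. The same issue affects $iv)$ near $k=0$, where one should instead use the smallness of $(J^0)'$ itself (e.g.\ $|(J^0)'(k)|\le k/2$ from the series) rather than the crude bound by $1$.

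A secondary weak point is your route to the constants in $i)$ and $iii)$. The inequality $|J^0(k)|\le\sqrt{2/(\pi k)}$ for all $k>0$ is true, but it is not a clean quotation from Watson: the classical monotonicity of $k\,(J_0^2+Y_0^2)(k)$ goes the wrong way at order zero (it decreases to $2/\pi$, giving a lower bound on $J_0^2+Y_0^2$), so it yields $|J_1|\le\sqrt{2/(\pi k)}$ (which would cover $iii)$) but not the bound on $J^0$ itself. The paper avoids this by two self-contained arguments of a different flavour: for $i)$ it sets $u=\sqrt{k}J^0$, uses the resulting equation $u''+(1+\frac{1}{4k^2})u=0$ and a monotone energy $H(k)=\frac{u'^2}{2}+\frac{u^2}{2}(1+\frac{1}{4k^2})$ to prove $k\,J^0(k)^2\le 2^{-1/2}$ for all $k$; for $iii)$ it writes $(J^0)'(k)=-\frac{2}{\pi}\int_0^1\alpha\sin(k\alpha)(1-\alpha^2)^{-1/2}\,d\alpha$ and bounds the alternating sum over the sign changes of $\sin(k\alpha)$ by its largest block. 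Your stationary-phase alternative can in principle deliver the same constants, but only with explicit remainder control down to moderate $k$, which is exactly the step you leave open.
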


\underline{Proof of Lemma \ref{lem:boundJ}}

{\it First Inequality :}
The bound $|J^0(k)| \leq 1$ is clear from the definition of $J^0$,
\begin{equation} \label{eq:J0bis}
J^0(k) = \frac{1}{2\pi}\int_0^{2\pi} e^{ik\cos \theta} \,d\theta = \frac{1}{\pi} \int_0^\pi \cos(k\cos \theta)\,d\theta \,.
\end{equation}
The bound by $(\sqrt 2 k)^{-\frac12}$ is obtained as follows. $J^0$ is solution of the ordinary differential equation
\begin{equation}
k^2 (J^0)'' + k (J^0)' + k^2 J^0 = 0 \,, \quad J^0(0)=1 \,, \; (J^0)'(0) = 0 \,.
\end{equation}
The new unknown $u = \sqrt{k}J^0$ is solution to
\[
u'' + \left( 1 + \frac{1}{4k^2} \right) u = 0 \,. \qquad 
\]
There are no exact initial conditions for $u$. However,
\[
u(k) \underset{ \scriptscriptstyle k \rightarrow 0^+}{=} \sqrt k [1+ O(k^2)] \,, \quad u'(k) \underset{\scriptscriptstyle k \rightarrow 0^+}{=} \frac1{2\sqrt k} [1+ O(k^2)] .
\]

The second equation admits the $k$-dependent energy,
\[
H(k) = H(k,u,u')= \frac{u'^2}{2} + \frac{u^2}{2}\left( 1+ \frac{1}{4k^2} \right) \,,
\]
that satisfies
\[
H(k) - H(k_0) = - \int_{k_0}^k \frac{u^2(l)}{4l^3} \,dl .
\]
It follows from the behaviour of $u$ near $0$ that
\[
H(k)  \underset{ \scriptscriptstyle k \rightarrow 0^+}{=} \frac1{4k} + O(k) \,.
\]
Moreover the series expansion of $J^0$ near $k=0$,
\[
J^0(k) = \sum_{j=0}^\infty (-1)^{j} \frac{k^{2j}}{2^{2j}(j!)^2}
\]
and its alternate character if $k \leq 2$ imply that $u^2(k) \geq k - \frac{k^3}{2}$ (valid for $k \leq \sqrt{2}$). Using, the inequality and the behavior of $H$ near $0$, we get if $0 < k_0 \leq k \leq \sqrt 2$
\begin{eqnarray}
H(k) & \leq & \frac{1}{4 k_0} + O(k_0)  - \int_{k_0}^k \left( \frac{1}{4l^2} - \frac{1}{8} \right) \,dl,  \nonumber\\
H(k) & \leq & \frac{1}{4k} + \frac{k}{8} \,, \label{eq:Hk}
\end{eqnarray}
since the first line is satisfied for any $k_0 >0$.
Therefore,
\begin{eqnarray*}
u^2(k) & \leq & k \frac{k^2+2}{4k^2+1}, \quad  k \leq \sqrt{2} \,.
\end{eqnarray*}
A simple calculation shows that the function appearing in the right hand side is increasing in $k$, so that 
\[
u^2(k) \leq \frac1{\sqrt 2},\quad k\in  [0,\sqrt 2]\,.
\]
For $k \geq \sqrt 2$, simply remark  that $H$ is decreasing and that from \eqref{eq:Hk}
\[
u^2(k) \leq 2 H(k) \leq  2 H(\sqrt 2) \leq \frac1{\sqrt2}
\]
In any case we get $u^2(k) \leq 2^{-\frac12}$ which gives the desired inequality.

\medskip

{ \it Second inequality : } It is a consequence of the first, for $k \geq 1$.
For $k \leq 1$, it may be obtain from a comparison of the entire development of $J^0$ and $(1+k)^{-1/4}$ around the origin. We get
\[
J^0(k) \leq 1 - \frac{k^2}{4} + \frac{k^4}{64} \leq 1 - \frac{k^2}{4} + \frac{5 k^4}{32} - \frac{15 k^6}{128} \leq (1+k^2)^{-1/4}
\]

\medskip

{ \it Third inequality :}  Taking the derivative of $J^0$ in the definition \eqref{eq:J0},
\begin{equation*}
(J^0)'(k)  =  \frac{i}{2\pi} \int _0^{2\pi} \cos \theta e^{i k \cos \theta} \,d\theta = - \frac{1}{\pi} \int _0^\pi \cos \theta \sin(k \cos \theta) \,d\theta\,  ,
\end{equation*}
from which it is clear that $| (J^0)'(k)| \leq 1$ for all $k$. Next we transform the previous integral in
\begin{eqnarray*} 
(J^0)'(k) & = & -\frac{2}{\pi} \int_0^1 \frac{\alpha \sin(k\alpha)}{\sqrt{1-\alpha^2}}\,d\alpha \, ,\\
& = &  \sum_{i=0}^{j-1} (-1)^j \int_{h_i}^{h_{i+1}} \frac{|\sin (k\alpha)|}{\sqrt{1-\alpha^2}} \alpha \,d\alpha :=  \sum_{i=0}^{j-1} (-1)^j s_j\,,
\end{eqnarray*}
where $(h_i)_{1\leq i\leq j}$ are the points where $\sin(k\theta)$ vanishes and $1$,
\[
h_0=0 < h_1=\frac{\pi}{k} < h_2 = \frac{2\pi}{k} < \ldots < h_{j-1} = \frac{(j-1)\pi}{k} < h_j = 1.
\] 
The previous sum has alterned sign, the larger terms occuring for large $i$. Its terms are with increasing  absolute values, except for the last one which is incomplete and may be smaller than the next to last term. However,
\[
 -s_1 \leq s_0 -s_1 \leq \sum_{i=0}^{j} (-1)^j s_j \leq s_0 -s_1 + s_2 \leq s_0,
\]
so that
\begin{eqnarray*}
|(J^0)'(k)| & \leq & \max(s_0,s_1) \leq \frac{2}{\pi} \int_{1-\pi/k}^1 \frac{\alpha d\alpha}{\sqrt{1-\alpha^2}} \\
& \leq & \frac{1}{\pi}  \sqrt{\frac{2\pi}{k} - \frac{\pi^2}{k^2}} \leq \sqrt{\frac{2}{\pi  k}},\quad k \geq \pi .
\end{eqnarray*}
This ends the proof of the third inequality.\\
The proof of $iv)$ is similar to the proof of $ii)$, since $\sqrt{\frac2\pi} < 2^{-\frac14}$. 
\cqfd

\bibliographystyle{plain}
{\bibliography{Gyro_FP}} 

\end{document}